\newcommand{\PGL}{\mathop{\mathrm{PGL}}}
\newcommand{\Var}{\mathop{\mathrm{Var}}}
\newcommand{\Alg}{\mathop{\mathrm{Alg}}}
\newcommand{\Br}{\mathop{\mathrm{Br}}}
\newcommand{\ind}{\mathop{\mathrm{ind}}}
\newcommand{\rdim}{\mathop{\mathrm{rdim}}}
\newcommand{\CH}{\mathop{\mathrm{CH}}}
\newcommand{\Corr}{\mathop{\mathrm{Corr}}}
\newcommand{\CR}{\mathop{\mathrm{CR}}}
\newcommand{\CM}{\mathop{\mathrm{CM}}}
\newcommand{\End}{\mathop{\mathrm{End}}}
\newcommand{\Hom}{\mathop{\mathrm{Hom}}}
\newcommand{\Spec}{\mathop{\mathrm{Spec}}}
\newcommand{\lcm}{\mathop{\mathrm{lcm}}}
\newcommand{\fonction}[5]{ #1~:\begin{array}{ccc}
 #2 & \longrightarrow & #3 \\
 #4 & \longmapsto & #5 \end{array}}
\newcommand{\ratrat}{\raisebox{2pt}{~~$\dashrightarrow$}\raisebox{-2pt}{$\!\!\!\!\!\!\!\!\!\!\dashleftarrow$~~}}
\newcommand{\implyss}[2]{$\mbox{(#1)}\Rightarrow\mbox{(#2)}$}
\newtheorem{thm}{Theorem}[section]
\newtheorem{theo}[thm]{Theorem}
\newtheorem{notation}[thm]{Notation}
\newtheorem{df}[thm]{Definition}
\newtheorem{prop}[thm]{Proposition}
\newtheorem{fact}[thm]{Fact}
\newtheorem{lem}[thm]{Lemma}
\newtheorem{cor}[thm]{Corollary}
\newtheorem{ex}[thm]{Example}
\begin{document}

\title{Upper motives of products of projective linear groups}
\author{Charles De Clercq}
\email{~declercq@math.jussieu.fr}
\address{Universit\'{e} Paris VI, 4 place Jussieu, 75252 Paris CEDEX 5}
%
%\author{Ola T\"{o}rnkvist}
%\email{compmath@lms.ac.uk}
%\address{}
%
%\dedication{A dedication can be included here.}
\classification{14L17, 14C15.}
\keywords{Grothendieck motives, upper motives, algebraic groups, Severi-Brauer varieties.}
%\thanks{This file documents \pkg{compositio} version \Fileversion\ and
%was last revised \Filedate.}

\begin{abstract}
Fix a base field $F$, a finite field $\mathbb{F}$ and consider a sequence of central simple $F$-algebras $A_1,...,A_n$. In this note we provide some results toward a classification of the indecomposable motives lying in the motivic decompositions of projective homogeneous varieties under the action of $\PGL_1(A_1)$$\times$$ ...$$\times$$ \PGL_1(A_n)$ with coefficients in $\mathbb{F}$. We give a complete classification of those motives if $n=1$ and derive from it the motivic dichotomy of $\PGL_1$. We then provide several classification results as well as counterexamples for arbitrary $n$ showing that the situation is less rigid. These results involve a neat study of rational maps between generalized Severi-Brauer varieties which is certainly of independent interest.
\end{abstract}

\maketitle

\vspace*{6pt}

%\tableofcontents  % for this guide only.
% A table of contents should normally not be included

In the present paper we prove certain results toward a classification of the indecomposable motives which appear in the decompositions of the motives (with finite coefficients) of products of varieties of flags of (right) ideals in central simple algebras.

We provide with theorem \ref{classmotsb} a complete classification of the indecomposable motives (with coefficients in $\mathbb{F}_p$) lying in the decomposition of motives of projective $\PGL_1(A)$-homogeneous varieties, in terms of the subgroup generated by the class of the $p$-primary component of $A$ in the Brauer group of base field and the dimension of the underlying ideals. This result was formerly settled only for classical Severi-Brauer varieties in \cite{amitsur}. This classification enables to observe in theorem \ref{dichoto} a startling rigidity of these motives with respect of $A$, which we call the \emph{motivic dichotomy} of projective linear groups. 

We then study the case of indecomposable motives lying in the motivic decompositions of projective homogeneous varieties under the action of products of projective linear groups. We obtain under some assumptions some classification results in terms of the Brauer group of the base field. However we prove that these assumptions are mandatory by providing several counterexamples, and show that there is no analogue of the motivic dichotomy for projective homogeneous varieties under the action of products of projective linear groups.

Our approach relies on two main ingredients : the theory of upper motives of \cite{upper} and some results on the rational geometry of products generalized Severi-Brauer varieties. After introducing the background and notations, we provide in the third section the classification results of products of generalized Severi-Brauer varieties up to rational maps in both directions. These results are certainly of independent interest, and are then related with the theory of upper motives in the last section to get our motivic classification results.

\section{Context}

We fix a base field $F$, and by a \emph{variety} (over $F$) we will mean a smooth, projective scheme over $F$. The category of varieties over $F$ will be denoted by $\Var/F$, while the category of (commutative) algebras over $F$ will be denoted by $\Alg/F$.

\textbf{Central simple algebras and associated varieties.} An algebra $A$ over $F$ is \emph{central simple} if the center of $A$ is $F$ and if $A$ does not have any non-trivial two sided ideal. The $F$-dimension of a central simple algebra $A$ is a square and the \emph{degree} of $A$ (denoted by $\deg(A)$) is the square root of $\dim_F(A)$. Two central simple algebras $A$ and $B$ are \emph{Brauer equivalent} if $M_n(A)$ and $M_m(B)$ are isomorphic for some integers $n$ and $m$. The tensor product of algebras endows the set $\Br(F)$ of equivalence classes of central simple algebras under the Brauer equivalence with a structure of a torsion abelian group. The \emph{index} of a central simple algebra $A$ is the degree of the (uniquely determined up to isomorphism) division algebra Brauer equivalent to $A$. The \emph{exponent} of $A$ is the order of the class of $A$ in $\Br(F)$, which always divides $\ind(A)$. The $F$-dimension of any right ideal $I$ in $A$ is divisible by the degree of $A$, and the \emph{reduced dimension of $I$} is the integer $\rdim(I):=\dim_F(I)/\deg(A)$.

We now can define the classical varieties associated with a central simple algebra $A$ over $F$. The functor
$$\fonction{\PGL{}_1(A)}{\Alg/F}{Groups}{R}{\{\mbox{$R$-algebra automorphisms of $A\otimes_F R$}\}}$$
is representable by an affine algebraic group (see \cite[20.4]{KMRT}). One can associate to any sequence $0\leq d_1 <...<d_k\leq \deg(A)$ the \emph{variety of flags of (right) ideals} of reduced dimension $d_1,...,d_k$ in $A$, defined by the representable functor
$$\fonction{X(d_1,...,d_k;A)}{\Alg/F}{Set}{R}{\left\{
\begin{array}{c}
\mbox{{\footnotesize sequences $I_1\subset ...\subset I_k$ of right ideals of $A\otimes R$ of reduced}}\\
\mbox{{\footnotesize dimension $d_1,...,d_k$ such that the injections $I_j\rightarrow A\otimes R$ split.}}\end{array}\right\}}.$$

Important examples of varieties of flags of right ideals in a central simple algebra $A$ are certainly the \emph{classical Severi-Brauer variety} $X(1;A)$ and the \emph{generalized Severi-Brauer varieties} $X(k;A)$.

Let $A_1,...,A_n$ be a sequence of central simple algebras over $F$ and consider the algebraic group $G=\PGL_1(A_1)\times ...\times \PGL_1(A_n)$. Any $G$-homogeneous variety is isomorphic to a product $X_1\times...\times X_n$, where for any $1\leq j \leq n$, $X_j$ is a variety of flags of right ideal in $A_j$.

\textbf{Grothendieck Chow motives.} The category Grothendieck Chow motives was introduced by Grothendieck as a linearization of the category of varieties over $F$, replacing a morphism of varieties $f:X\rightarrow Y$ by the algebraic cycle (modulo rational equivalence) defined by its graph on the product $X\times Y$. For any $X\in \Var/F$ and any commutative ring $\Lambda$, the group of \emph{$i$-dimensional cycles on $X$ modulo rational equivalence} with coefficients in $\Lambda$ is defined by $\CH_i(X;\Lambda)=\CH_i(X)\otimes \Lambda$ (we refer to \cite{fulton} or \cite{EKM} for the definition of Chow groups).

\begin{notation}
If $Y\in \Var /F$ is a variety and $X=\bigsqcup_{i=1}^nX_i$ the decomposition of another variety $X$ into irreducible components, the group of \emph{correspondences of degree $k$} between $X$ and $Y$ with coefficients in $\Lambda$ is defined by $\Corr_k(X,Y;\Lambda)=\bigoplus_{i=1}^n\CH_{\dim(X_i)+k}(X_i\times Y;\Lambda)$. 
\end{notation}
\newpage
For any three varieties $X$, $Y$ and $Z$ the three projections

$$\xymatrix{
   & X\times Y\times Z \ar_{\pi_2}[ld]\ar^{\pi_1}[d]\ar^{\pi_3}[rd]& \\
    X\times Y & X\times Z & Y\times Z
  }$$

\vspace{0,2cm}

\noindent give rise to a bilinear pairing $\cdot \circ \cdot:\Corr_k(Y,Z;\Lambda)\otimes \Corr_{k'}(X,Y;\Lambda)\rightarrow \Corr_{k+k'}(X,Z;\Lambda)$ given by $\beta\circ \alpha:=\left.\pi_1\right._{\ast}\left(\pi_2^{\ast}(\alpha)\cdot \pi_3^{\ast}(\beta)\right)$ (see \cite[\S 63]{EKM}).

First, we denote by $\CR(F;\Lambda)$ the \emph{category of correspondences} with coefficients in $\Lambda$, defined as follows. Its objects are finite direct sums of pairs $X[i]$, where $X\in \Var/k$ is a variety and $i$ an integer. A morphism between two objects $X[i]$ and $Y[j]$ is a correspondence of $\Corr_{i-j}(X,Y;\Lambda)$, the composition being the one previously defined.

The category $\CM(F;\Lambda)$ is then defined as the pseudoabelian envelope of $\CR(F;\Lambda)$. Its objects are couples $(X,\pi)$, where $X$ is an object of $\CR(F;\Lambda)$ and $\pi$ a projector in $\End_{\CR(F;\Lambda)}(X)$. Abusing notation, we will denote by $(X,\pi)[i]$ the direct summand of a twist of a variety $X[i]$ defined by a projector $\pi \in \Corr_0(X,X;\Lambda)$. The morphisms of $\CM(F;\Lambda)$ are given by the formula 
$$\Hom{}_{\!\CM(F,\Lambda)}\left((X,\pi),(Y,\rho)\right)=\rho\circ \Hom{}_{\!\CR(F,\Lambda)}\left(X,Y\right)\circ \pi.$$

Finally for any commutative ring $\Lambda$, one gets the realization functor
$$\begin{array}{ccc}
\Var/F&\longrightarrow &\CM(F;\Lambda)\\
X&\longmapsto&M(X):=(X,\Gamma_{id_X})[0]\\
f:X\rightarrow Y&\longmapsto &\Gamma_f\otimes 1\end{array}$$
where $\Gamma_f$ denotes the cycle defined by the graph of $f$.

The set of the \emph{Tate motives} in $\CM(F;\Lambda)$ is the set of all the twists of $M(\Spec(F))$, which will be denoted by $\{\Lambda[i],~i\in \mathbb{Z}\}$. For any motive $M\in \CM(F;\Lambda)$, the \emph{i-th Chow group} of $M$ is defined by $\CH_i(M;\Lambda):=\Hom(\Lambda[i],M)$.

\section{Upper motives}

$\indent$From now on we assume that the ring of coefficients $\Lambda$ is finite and connected and we fix an algebraic closure $\overline{F}$ of $F$. Let $G$ be a direct product $\PGL_1(A_1)\times...\times \PGL_1(A_n)$ of projective linear groups associated to some central simple algebras $A_1,...,A_n$ over $F$ and let $X$ be a $G$-homogeneous variety.

By \cite{kock} (see also \cite{chermergil}) the motive $M(X_{\overline{F}})\in \CM(F;\Lambda)$ is isomorphic to a finite direct sum of Tate motives. Moreover by \cite{chermer} $X$ satisfies Rost nilpotence principle and thus 
the results of \cite{chermer} (see also \cite{upper}) show that the motive of $X$ decomposes in an essentially unique way as a direct sum of indecomposable motives.

The theory of upper motives was introduced in \cite{upper} to describe the indecomposable objects of $\CM(F;\Lambda)$ lying in the decomposition of homogeneous varieties under the action of a semisimple affine algebraic group. We will only focus in this note on this description in the case where the algebraic group $G$ is a product of projective linear groups. We will also stick to motives of coefficients in $\mathbb{F}_p$, as the results of \cite{decvarprojcoeff} show these are the essential case.\\

\newpage

\begin{df}Let $X$ be a $\PGL_1(A_1)$$\times$$...$$\times$$\PGL_1(A_n)$-homogeneous variety. The \emph{upper $p$-motive} of $X$ is defined as the isomorphism class of the indecomposable summand of $M(X)\in\CM(F;\mathbb{F}_p)$ whose $0$-codimensional Chow group is non-zero.
\end{df}

In the sequel, for any sequence of $p$-primary division algebras $D_1,...,D_n$, the upper $p$-motive of the variety $X(p^{k_1};D_1)\times ...\times X(p^{k_n};D_n)$ will be denoted by $M_{D_1,...,D_n}^{k_1,...,k_n}$.

\begin{prop}\label{upp}Let $G=\PGL_1(A_1)\times...\times \PGL_1(A_n)$ be a product of projective linear groups associated some central simple algebras $A_1,...,A_n$. Any indecomposable summand of the motive $M(X)\in \CM(F;\mathbb{F}_p)$ of a $G$-homogeneous variety $X$ is isomorphic to a shift of $M^{k_1,...,k_n}_{D_1,...,D_n}$, where for any $1\leq j \leq n$, $D_j$ is the division algebra Brauer-equivalent to the $p$-primary component of $A_j$.
\end{prop}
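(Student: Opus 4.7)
The plan is to bootstrap from the general classification of upper motives in \cite{upper}. That theorem asserts that every indecomposable summand of the motive of a projective homogeneous variety under a semisimple group is a shift of the upper $p$-motive of some projective homogeneous variety under the same group. Applied to $G = \PGL_1(A_1)\times \cdots \times \PGL_1(A_n)$, and using that any $G$-homogeneous variety decomposes as a product $Y_1\times\cdots\times Y_n$ where each $Y_j$ is a variety of flags of right ideals in $A_j$, the proposition is reduced to showing that the upper $p$-motive of any such product has the form $M^{k_1,\ldots,k_n}_{D_1,\ldots,D_n}$ for some tuple $(k_1,\ldots,k_n)$.

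The key tool for the remaining reduction is the criterion, proved in \cite{upper}, that two projective homogeneous varieties $X$ and $X'$ (under inner forms of a fixed semisimple group) have isomorphic upper $p$-motives if and only if there exist correspondences $X\to X'$ and $X'\to X$ of multiplicity coprime to $p$. Writing $X\sim_p X'$ for this equivalence, the relation is compatible with direct products, since correspondences can be multiplied factorwise. It therefore suffices to show that each $Y_j$ is $\sim_p$-equivalent to $X(p^{k_j};D_j)$ for an appropriate $k_j$, and then assemble the factorwise equivalences into one on products.

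Fix $j$ and write $Y_j = X(d_1,\ldots,d_s;A_j)$. I would pick an index $i_0$ minimizing $v_p(d_i)$, set $k_j := v_p(d_{i_0})$, and establish $Y_j\sim_p X(p^{k_j};D_j)$ in two substeps. First, the projection $Y_j\to X(d_{i_0};A_j)$ gives one correspondence, while the reverse direction is obtained by lifting a single ideal of reduced dimension $d_{i_0}$ to a full flag after a field extension of degree coprime to $p$. Second, $X(d_{i_0};A_j)\sim_p X(p^{k_j};D_j)$: since $A_j$ is Brauer equivalent to the tensor product of its $p$-primary component $D_j$ and its $p'$-primary component, and the latter admits points of degree coprime to $p$ on its associated flag varieties, the relevant index reduction formulas (Schofield--Van den Bergh and Merkurjev) produce correspondences of multiplicity coprime to $p$ in both directions between these two generalized Severi-Brauer varieties.

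The main obstacle is the explicit construction of these correspondences of multiplicity coprime to $p$: this forces one to control the indices of the algebras $A_j$ over the various function fields of generalized Severi-Brauer varieties involved, and is precisely the kind of rational-geometric input on products of generalized Severi-Brauer varieties that is developed in Section~3 of the paper.
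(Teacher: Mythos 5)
The paper's own proof is a one-line citation of \cite[Theorem 3.5 or Theorem 3.8]{upper}; those theorems already package the reduction from an arbitrary flag variety in $A_j$ to a generalized Severi--Brauer variety of the $p$-primary division algebra $D_j$. Your proposal re-derives that reduction from weaker inputs of \cite{upper} (the general statement that indecomposable summands are shifted upper $p$-motives of projective homogeneous varieties, and the characterization of isomorphism of upper $p$-motives via correspondences of multiplicity prime to $p$) together with index reduction. At bottom this is the same chain of ideas that proves Karpenko's theorems, so the route is not conceptually different, but you make the argument explicit where the paper leaves it to the citation. The factorwise assembly (multiplicity is multiplicative under products of correspondences) and the two-step reduction for each factor $Y_j$ are both sound in spirit.

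One correction is needed in the normalization of $k_j$. You set $k_j := v_p(d_{i_0})$ with $i_0$ minimizing $v_p(d_i)$, but the proposition requires $p^{k_j} \leq \deg D_j$ for $X(p^{k_j};D_j)$ to make sense; if $\min_i v_p(d_i) > v_p(\deg D_j)$ your choice is out of range. The correct choice is $k_j = \min\bigl(\min_i v_p(d_i),\, v_p(\deg D_j)\bigr)$. In the degenerate case where the cap is attained, $D_j$ splits over the relevant function field after an extension of degree prime to $p$, so the $j$-th factor contributes a Tate motive and the reduction is trivial. With this adjustment the index-reduction computations in your second substep produce the required correspondences of multiplicity prime to $p$ in both directions, and the factorwise assembly completes the proof.
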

\begin{proof}One may either directly apply \cite[Theorem 3.5]{upper} to the case where $G$ is a product of projective linear groups or apply \cite[Theorem 3.8]{upper} to each factor.
\end{proof}

Proposition \ref{upp} implies that the classification of the indecomposable summands of the motives of $\PGL_1(A_1)\times...\times \PGL_1(A_n)$-homogeneous varieties with coefficients in $\mathbb{F}_p$ is reduced to the classification of the upper $p$-motives $M_{D_1,...,D_n}^{k_1,...,k_n}$, where for any $1\leq i \leq n$, $D_i$ is a $p$-primary division algebra and $0\leq k_i<v_p(\deg(D_i))$. The next section is dedicated to the study of rational maps between such products of generalized Severi-Brauer varieties, and we will study the connections between this classical problem and the classification of those upper $p$-motives in the last section.

\section{Rational maps and product of generalized Severi-Brauer varieties}

$\indent$In this section we introduce the needed tools to the classification results of the indecomposable motives of homogeneous varieties under the action of a product of projective linear groups. These tools involve a refined study of the rational maps between products of generalized Severi-Brauer varieties.

\begin{notation}Let $k_1,...,k_n$ be a sequence of integers and $D,D_1,...,D_n$ be $p$-primary division algebras of degree $p^s$ for some prime $p$. Let us define the function
$$\fonction{\mu^{k_1,...,k_n}_{D,D_1,...,D_n}}{\mathbb{N}^n}{\mathbb{N}}{(i_1,...,i_n)}{\frac{p^{k_1}}{\gcd(i_1,p^{k_1})}\cdots\frac{p^{k_n}}{\gcd(i_n,p^{k_n})}\ind(D\otimes D_1^{\otimes -i_1}\otimes...\otimes D_n^{\otimes -i_n})}.$$
\end{notation}

In the sequel the following index reduction formula due to Merkurjev, Panin and Wadsworth (see \cite{flag}) will be of constant use.

\begin{fact}\label{indexx}Let $p$ be a prime and $D,D_1,...,D_n$ be division algebras of degree $p^s$ over $F$. For any sequence of integers $0\leq k_1,...,k_n\leq s$ the index of $D$ over the function field of the variety $X(p^{k_1};D_1)\times ...\times X(p^{k_n};D_n)$ is given by
$$\ind(D_{F(X(p^{k_1};D_1)\times ...\times X(p^{k_n};D_n))})=\min_{1\leq i_1,...,i_n\leq p^s}\mu^{k_1,...,k_n}_{D,D_1,...,D_n}(i_1,...,i_n).$$
\end{fact}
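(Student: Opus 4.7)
Plan: This fact is the instance of the Merkurjev--Panin--Wadsworth index reduction theorem \cite{flag} for the twisted flag variety of the semisimple group $\PGL_1(D_1) \times \cdots \times \PGL_1(D_n)$. One may either cite the general theorem directly, or argue by induction on $n$ from its single-variable incarnation for a generalized Severi--Brauer variety; I would adopt the latter.

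\textbf{Base case $n=1$.} For $n=1$ the claim reduces to
$$\ind(D_{F(X(p^{k};A))}) = \min_{1 \leq i \leq p^s} \frac{p^{k}}{\gcd(i,p^{k})}\ind(D \otimes A^{\otimes -i}),$$
for any central simple $F$-algebra $A$ of degree $p^s$. Specialized to $A = D_1$ this gives the $n=1$ case of the fact, and I would quote it from \cite{flag}. The proof there runs through the $K$-theory of $X(p^{k};A)$ and depends on $A$ only through its degree and Brauer class, so the identity is valid for every central simple $A$ of degree $p^s$ and not merely for division algebras --- a flexibility I will need to close the induction.

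\textbf{Inductive step.} Suppose the fact is established for $n-1$ factors, with arbitrary central simple algebras of degree $p^s$ in place of $D_1,\ldots,D_{n-1}$. Set $L := F(X(p^{k_n};D_n))$; then the function field of the full product decomposes as
$$F\bigl(X(p^{k_1};D_1) \times \cdots \times X(p^{k_n};D_n)\bigr) = L\bigl(X(p^{k_1};(D_1)_L) \times \cdots \times X(p^{k_{n-1}};(D_{n-1})_L)\bigr).$$
Applying the inductive hypothesis over $L$ expresses $\ind(D_{F(X)})$ as a minimum over $(i_1,\ldots,i_{n-1})$ of a product of factors $p^{k_j}/\gcd(i_j,p^{k_j})$ multiplied by a residual index over $L$ of the central simple $F$-algebra $B := D \otimes D_1^{\otimes -i_1} \otimes \cdots \otimes D_{n-1}^{\otimes -i_{n-1}}$. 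The base case applied to $B$ then yields
$$\ind(B_L) = \min_{1 \leq i_n \leq p^s} \frac{p^{k_n}}{\gcd(i_n,p^{k_n})}\ind(B \otimes D_n^{\otimes -i_n}).$$
Substituting and collapsing the two nested minima into a single one over $1 \leq i_1,\ldots,i_n \leq p^s$ produces exactly $\min \mu^{k_1,\ldots,k_n}_{D,D_1,\ldots,D_n}(i_1,\ldots,i_n)$, as required.

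\textbf{Main obstacle.} The genuinely delicate point is that the algebras $(D_j)_L$ for $j < n$ need not remain division algebras after base change to $L$: they stay central simple of degree $p^s$, but their indices may have dropped. The induction therefore cannot be closed inside the category of division algebras, which is why the single-variable base case must already be available for all central simple algebras of degree $p^s$. This is where the real content of the fact lies, rather than in the combinatorial manipulation of $\mu$, and is the reason I regard the input from \cite{flag} as essential.
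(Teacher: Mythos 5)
The paper offers no proof of Fact~\ref{indexx}: it is quoted verbatim as an instance of the Merkurjev--Panin--Wadsworth index reduction formula for twisted flag varieties, with the reference \cite{flag} as the entire justification. Your proposal instead quotes only the single-factor instance of MPW and then derives the $n$-factor statement by induction on $n$, via the tower $F \subset L := F(X(p^{k_n};D_n)) \subset L(X(p^{k_1};(D_1)_L) \times \cdots)$. This is a genuinely different (and more self-contained) route than the paper's, and it is correct: the nested minima collapse exactly as you say because the cofactors $p^{k_j}/\gcd(i_j,p^{k_j})$ are independent of the field of definition. You also correctly isolate the one non-formal point, namely that $(D_j)_L$ need not remain division, so the single-factor input must be available for an arbitrary central simple algebra $A$ of degree $p^s$. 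That strengthened statement is indeed what \cite{flag} provides: the MPW computation of $K_0$ of $X(p^k;A)$ and the resulting coefficients depend only on the type of the parabolic (hence on $\deg A = p^s$ and $\ell = p^k$), and the algebra enters only through the Brauer classes $[A^{\otimes j}]$, so the $\mu$-shaped formula is insensitive to whether $A$ is division. The only thing I would tighten is this last step: rather than gesturing at the $K$-theoretic proof, it would be cleaner to cite the MPW formula for central simple $A$ explicitly, since that is precisely the nontrivial content you are importing. As it stands, though, the argument is sound and each approach buys something: the paper's citation is shorter and matches the source directly, while your induction makes transparent that the $n$-factor case carries no new content beyond the $n=1$ case plus a change-of-base bookkeeping.
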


\textbf{Products of classical Severi-Brauer varieties}. Let $p$ be a prime and $D_1,...,D_n,D'_1,...,D'_m$ be a sequence of division algebras over $F$ of degree $p^s$. We give here an algebraic criterion for the existence of rational maps $X(1;D_1)\times ...\times X(1;D_n) \ratrat X(1;D'_1)\times ...\times X(1;D'_m)$ in terms of the subgroup of the Brauer group of $F$ generated by the classes of the underlying algebras.

\begin{prop}\label{class}Let $p$ be a prime and $D_1,...,D_n,D'_1,...,D'_m$ be a sequence of division algebras of degree $p^s$ over $F$, the following assertions are equivalent :
\begin{enumerate}
\item there are two rational maps 
$$X(1;D_1)\times...\times X(1;D_n)\ratrat X(1;D'_1)\times...\times X(1;D'_m);$$
\item the classes of $D_1,...,D_n$ and the classes of $D'_1,...,D'_m$ generate the same subgroup in the Brauer group of $F$.
\end{enumerate}
\end{prop}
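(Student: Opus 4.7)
The strategy is to convert both statements into properties over the function field $K:=F(X(1;D_1)\times\cdots\times X(1;D_n))$ (and symmetrically), and then to apply Fact \ref{indexx} in its simplest form, with all $k_i=0$.

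\textbf{Step 1: Translate rational maps into rational points.} Since every $X(1;D'_j)$ is smooth and proper, a rational map $X(1;D_1)\times\cdots\times X(1;D_n)\dashrightarrow X(1;D'_1)\times\cdots\times X(1;D'_m)$ is equivalent to the target having a $K$-point. As the target is a product, this occurs iff each $X(1;D'_j)$ has a $K$-point, which in turn is the classical fact that $D'_j$ splits over $K$, i.e.\ $\ind((D'_j)_K)=1$. The analogous statement holds with the roles of the $D_i$ and $D'_j$ exchanged.

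\textbf{Step 2: Use Fact \ref{indexx} to translate splitting into a Brauer relation.} Taking $k_1=\cdots=k_n=0$, each factor $p^{k_i}/\gcd(i_i,p^{k_i})$ collapses to $1$, so the index reduction formula becomes
$$\ind((D'_j)_K)=\min_{1\le i_1,\dots,i_n\le p^s}\ind\bigl(D'_j\otimes D_1^{\otimes -i_1}\otimes\cdots\otimes D_n^{\otimes -i_n}\bigr).$$
This minimum equals $1$ iff there exist integers $i_1,\dots,i_n$ with $[D'_j]=i_1[D_1]+\cdots+i_n[D_n]$ in $\Br(F)$, that is, iff $[D'_j]$ lies in the subgroup $\langle [D_1],\dots,[D_n]\rangle$. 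Since every $[D_k]$ has order dividing $p^s$, the range $1\le i_k\le p^s$ exhausts the relevant coefficients.

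\textbf{Step 3: Assemble.} For \implyss{1}{2}: the existence of the first rational map forces $[D'_j]\in\langle [D_1],\dots,[D_n]\rangle$ for every $j$; the existence of the second map similarly forces $[D_i]\in\langle [D'_1],\dots,[D'_m]\rangle$ for every $i$; together, the two families generate the same subgroup of $\Br(F)$. Conversely, for \implyss{2}{1}: if the subgroups agree then each $[D'_j]$ is a $\mathbb{Z}$-linear combination of the $[D_i]$, so by Step~2 $(D'_j)_K$ splits for each $j$; hence $X(1;D'_1)\times\cdots\times X(1;D'_m)$ acquires a $K$-point, yielding the rational map by Step~1, and the symmetric argument produces the map in the opposite direction.

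\textbf{Main obstacle.} There is no serious obstacle: once one has Fact \ref{indexx} and the Severi--Brauer variety/rational point dictionary, the statement falls out. The only mild point to verify is that the restriction $1\le i_k\le p^s$ in Fact \ref{indexx} is broad enough to represent every element of $\langle [D_1],\dots,[D_n]\rangle$, which is immediate from the fact that $\exp(D_k)\mid p^s$.
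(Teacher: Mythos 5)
Your proof is correct and follows essentially the same route as the paper: translate rational maps to rational points over the function field, then apply the Merkurjev--Panin--Wadsworth index reduction formula (Fact \ref{indexx}) with all $k_i=0$ to convert splitting into a Brauer-group relation. The only cosmetic difference is that you phrase the dictionary slightly more systematically (handling the product target all at once rather than factor by factor), but the mathematical content is identical.
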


\begin{proof}
\implyss{i}{ii} By symmetry, it suffices to show that for any $1\leq j\leq n$, the existence of a rational map $X(1;D'_1)\times...\times X(1;D'_m)\dashrightarrow X(1;D_j)$ implies that the class of $D_j$ lies in the subgroup generated by the classes of $D'_1,...,D'_m$ in the Brauer group of $F$.

Let us denote by $E$ the function field $F(X(1;D'_1)\times...\times X(1;D'_m))$. The existence of a rational point on $X(1;D_j)$ over $E$ implies that $D_j$ splits over $E$. By fact \ref{indexx} this implies that there are some integers $i_{j,1},...,i_{j,m}$ such that
$$\mu^{0,...,0}_{D_j,D'_1,...,D'_m}(i_{j,1},...,i_{j,m})=\ind(D_j\otimes D'_1{}^{\otimes-i_{j,1}}\otimes ...\otimes D'_m{}^{\otimes -i_{j,m}})=1$$
hence $[D_j]=[D'_1]^{i_{j,1}}...[D'_m]^{i_{j,m}}$ in the Brauer group of $F$.

\vspace{0,5cm}

\implyss{ii}{i} As for the first implication, it suffices to show that if for some $0\leq j \leq n$ the class of $D_j$ lies in the subgroup of the Brauer group of $F$ generated by the classes of $D'_1,...,D'_m$, there is a rational map $X(1;D'_1)\times...\times X(1;D'_m)\dashrightarrow X(1;D_j)$. Assume that $[D_j]=[D'_1]^{i_{j,1}}...[D'_m]^{i_{j,m}}$ for some integers $i_{j,1},...,i_{j,m}$. Since $\mu_{D_j,D'_1,...D'_m}^{0,...,0}(i_{j,1},...,i_{j,m})=1$, the division algebra $D_j$ splits over the function field $E=F(X(1;D'_1)\times...\times X(1;D'_m))$. The variety $X(1;D_j)$ has therefore a rational point over $E$ and we get the needed rational map.
\end{proof}

\textbf{Products of generalized Severi-Brauer varieties}. We now consider the case of products of generalized Severi-Brauer varieties. As for the classical ones, we would like to give an algebraic criterion for the existence of rational maps in both directions between products of Severi-Brauer varieties in terms of the subgroups generated by the classes of the underlying algebras in the Brauer group of the base field. We will give such a criterion with some assumptions on the exponents of the algebras, and show that one can't expect to release this restriction by producing some counterexamples.

\begin{lem}\label{lemma}Let $p$ be a prime and $D,D_1,...,D_n$ division algebras of degree $p^s$. Assume that
\begin{enumerate}
\item $\exp(D)\geq \max_{i=1,...,n}\exp(D_i)$; 
\item for some $0\leq k< s$, there is a rational map
$$X(p^k,D_1)\times ...\times X(p^k,D_n)\dashrightarrow X(p^k,D).$$
\end{enumerate}
Then there are some integers $i_1,...,i_n$ which satisfy $\sum_{j=1}^nv_p(\gcd(i_j,p^k))=k(n-1)$ and such that $[D]=[D_1]^{i_1}...[D_n]^{i_n}$ in the Brauer group of $F$.
\end{lem}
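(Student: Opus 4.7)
The plan is to translate the rational map into an index condition via Fact~\ref{indexx} and then exploit hypothesis~(1) through an order computation in the $p$-primary Brauer group. The rational map provides a rational point of $X(p^k;D)$ over $E:=F(X(p^k;D_1)\times\cdots\times X(p^k;D_n))$, so $\ind(D_E)$ divides $p^k$. Fact~\ref{indexx} then yields integers $i_1,\ldots,i_n$ satisfying
$$\Bigl(\prod_{j=1}^n\frac{p^k}{\gcd(i_j,p^k)}\Bigr)\cdot\ind\bigl(D\otimes D_1^{\otimes -i_1}\otimes\cdots\otimes D_n^{\otimes -i_n}\bigr)\ \Bigm|\ p^k.$$
Setting $B:=D\otimes D_1^{\otimes -i_1}\otimes\cdots\otimes D_n^{\otimes -i_n}$, $a_j:=v_p(\gcd(i_j,p^k))\le k$, $c:=v_p(\ind(B))$ and $c':=v_p(\exp(B))\le c$, this rewrites as $\sum_j a_j\ge(n-1)k+c$.

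The core of the argument is the identity in $\Br(F)$ coming from $p^{c'}[B]=0$:
$$p^{c'}[D]=\sum_{j=1}^n p^{c'}\,i_j\,[D_j].$$
Writing $e:=v_p(\exp D)$ and $e_j:=v_p(\exp D_j)$, the left-hand side has order $p^{\max(0,e-c')}$, while each summand on the right has order dividing $p^{\max(0,e_j-c'-v_p(i_j))}$. Hypothesis~(1), namely $e_j\le e$, then forces---provided $c'<e$---an index $j^*$ with $e_{j^*}=e$ and $v_p(i_{j^*})=0$, so that $a_{j^*}=0$. Combining this with $\sum_j a_j\ge(n-1)k+c$ and $\sum_{j\ne j^*}a_j\le(n-1)k$ yields $c\le 0$, hence $c=0$, $[D]=\sum_j i_j[D_j]$ and $\sum_j a_j=(n-1)k$, which is the desired conclusion.

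The principal obstacle is producing $j^*$ via the order comparison, where hypothesis~(1) is decisive: without $e_j\le e$ a summand on the right could contribute an element of order exceeding that of $[D]$, breaking the argument. The borderline case $c'=e$ escapes the comparison and must be excluded by a separate analysis showing that the index constraint $c\le k$ together with $k<s$ is incompatible with $\exp B=\exp D$, so that $(i_j)$ can always be adjusted to bring $c'$ strictly below $e$.
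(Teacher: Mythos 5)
Your route is genuinely different from the paper's. The paper supposes $\sum_j v_p(\gcd(i_j,p^k)) > (n-1)k$, notes this forces every $v_p(\gcd(i_j,p^k))>0$ hence $\exp(D_j^{\otimes i_j})<\exp(D)$ for all $j$, so $\exp(B)=\exp(D)$; then the index bound $\ind(B)\mid p^{\sum a_j-(n-1)k}$ combined with $\sum_{l\neq j}a_l\le (n-1)k$ gives $\exp(D_j)\mid\exp(D)\mid p^{a_j}\mid p^{v_p(i_j)}$, so each $D_j^{\otimes i_j}$ is split, whence $[B]=[D]$ and $\ind(B)=p^s>p^k$, a contradiction. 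You instead try to locate an index $j^*$ with $a_{j^*}=0$ directly via an order computation in $\Br(F)$, then feed it back into $\sum a_j\ge (n-1)k+c$ to force $c=0$ and $\sum a_j=(n-1)k$. Your $c'<e$ branch is correct and is a nice alternative derivation.

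However, your treatment of the borderline case $c'=e$ has a genuine gap. The claim that ``the index constraint $c\le k$ together with $k<s$ is incompatible with $\exp B=\exp D$'' is simply false as stated: take $D$ of index $p^s$ and exponent $p$; then an algebra $B$ of index $p^c\le p^k$ and exponent $p$ satisfies $\exp B=\exp D$ with no contradiction whatsoever. Nor is it clear in what sense the $i_j$ produced by Fact~\ref{indexx} ``can be adjusted'' to lower $c'$; they are constrained to realize the minimum of $\mu$. What actually rules out $c'=e$ is exactly the paper's chain: if $c=0$ then $B$ is split, so $c'=0=e$, contradicting that $D$ is a nonsplit division algebra; if $c>0$ then $\sum a_j>(n-1)k$, so all $a_j>0$, each $D_j^{\otimes i_j}$ has exponent strictly below $\exp(D)$, hence $\exp B=\exp D$, and one then runs the paper's argument to get $[B]=[D]$ and the index contradiction. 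In short, the elegant $j^*$-extraction covers only part of the ground, and the case you defer requires, in substance, the argument the paper uses throughout; your sketch of that fallback is incorrect and must be replaced.
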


\begin{proof}We denote by $E$ the function field $F(X(p^k,D_1)\times ...\times X(p^k,D_n))$. By assumption $\mbox{(ii)}$, $X(p^k,D)$ has a rational point over $E$, hence $\ind(D_E)$ divides $p^k$. There is therefore by fact \ref{indexx} a sequence of integers $(i_1,...,i_n)$ such that $\mu_{D,D_1,...,D_n}^{k,...,k}(i_1,...,i_n)$ divides $p^k$. Computing the $p$-adic valuation of $\mu_{D,D_1,..,D_n}^{k,...,k}(i_1,...,i_n)$, we get the inequality
$$nk-\sum_{j=1}^nv_p(\gcd(i_j,p^k))\leq k. (\ast)$$
We claim that inequality $(\ast)$ is in fact an equality. Assume that $(\ast)$ is not an equality, that is to say that $(n-1)k<\sum_{j=1}^nv_p(\gcd(i_j,p^k))$. Since for any $1\leq j\leq n$, $v_p(\gcd(i_j,p^k))$ is at most equal to $k$, this inequality implies that for any such $j$, $v_p(\gcd(i_j,p^k))$ is non-zero. As a direct consequence and for any $j$, the exponent of the algebra $D_j^{i_j}$ is strictly lesser than $\exp(D)$ by assumption $\mbox{(i)}$, and in particular
$$\exp(D\otimes D_1^{\otimes -i_1}\otimes ...\otimes D_n^{\otimes -i_n})=\exp(D).$$

Since $\mu_{D,D_1,...,D_n}^{k,...,k}(i_1,...,i_n)$ divides $p^k$ and the exponent of a central simple algebras always divides its index, the $p$-adic valuation of $\exp(D)$ is at most $\left(\sum_{j=1}^nv_p(\gcd(i_j,p^k))\right)-(n-1)k$. Assumption $\mbox{(i)}$ then implies that for any $1\leq j \leq n$, $\exp(D_j)\mid \exp(D)\mid p^{v_p(\gcd(i_j,p^k))}$, hence $D_j^{i_j}$ is split. Going back to the expression of $\mu_{D,D_1,...,D_n}^{k,...,k}(i_1,...,i_n)$, we have
$$\ind(D)=\ind(D\otimes D_1^{\otimes -i_1}\otimes ...\otimes D_n^{\otimes -i_n})\mid p^{\sum_{j=1}^nv_p(\gcd(i_j,p^k))-(n-1)k}$$
and thus $\ind(D)$ divides $p^k$, a contradiction.

We have therefore shown the claim, i.e. $\sum_{j=1}^nv_p(\gcd(i_j,p^k))=(n-1)k$. It is then clear that since
$$\ind(D_E)=\mu_{D,D_1,...,D_n}^{k,...,k}(i_1,...,i_n)=p^k\ind(D\otimes D_1^{\otimes -i_1}\otimes ...\otimes D_n^{\otimes -i_n})$$
divides $p^k$, the equality $[D]=[D_1]^{i_1}...[D_n]^{i_n}$ holds in $\Br(F)$. 
\end{proof}

\vspace{0,2cm}

We now look at the consequences of lemma \ref{lemma} in the classification of products of generalized Severi-Brauer varieties up to rational maps in both directions. The motivic consequences of these results will be discussed in the next section.

\vspace{0,2cm}

\textbf{Classification for generalized Severi-Brauer varieties}. The following theorem shows that up to rational maps in both directions, non-trivial generalized Severi-Brauer varieties of right ideals of $p$-primary reduced dimension in $p$-primary division algebras are completely classified by the subgroup of the Brauer group generated by the classes underlying algebras and the dimension of the underlying ideals.

\begin{theo}\label{classratsb}
Let $p$ be a prime, and $D$, $D'$ be two $p$-primary division algebras over $F$. Assume that $0\leq k<\deg(D)$ and $0\leq k'<\deg(D')$. There are two rational maps $X(p^k;D)\ratrat X(p^{k'};D')$ if and only if $k=k'$ and the classes of $D$ and $D'$ generate the same subgroup of $\Br(F)$.
\end{theo}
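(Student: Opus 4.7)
For the implication $(\Leftarrow)$, apply Fact~\ref{indexx} directly. The assumption $\langle[D]\rangle=\langle[D']\rangle$ gives $[D']=[D]^{i}$ with $\gcd(i,p)=1$, and consequently $\deg(D)=\deg(D')$, since the index of $[D]^{i}$ equals $\ind(D)$ whenever $i$ is coprime to $p$ (both divisibilities are immediate from any splitting field of one class splitting the other). For this integer $i$, the quantity $\mu^{k}_{D',D}(i)=\frac{p^{k}}{\gcd(i,p^{k})}\ind(D'\otimes D^{\otimes -i})$ reduces to $p^{k}\cdot 1=p^{k}$, so $\ind(D'_{F(X(p^{k};D))})\mid p^{k}$ and $X(p^{k};D')$ acquires a rational point over $F(X(p^{k};D))$; the reverse direction is symmetric.

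For $(\Rightarrow)$, the plan is to reduce to a single application of Lemma~\ref{lemma} with $n=1$, whose hypothesis requires the rational map to be between two generalized Severi-Brauer varieties at the \emph{same} level $p^{k}$. The key observation is that if $X(p^{k'};D')\dashrightarrow X(p^{k};D)$, then $\ind(D_{F(X(p^{k'};D'))})\mid p^{k}$, and hence $X(p^{m};D)$ has a rational point over $F(X(p^{k'};D'))$ for every $m$ with $k\le m<v_{p}(\deg D)$. Swapping $(D,k)\leftrightarrow(D',k')$ if necessary, one arranges simultaneously $k\le k'$ and $\exp(D)\ge\exp(D')$; the compatibility of these two WLOG assumptions is the main obstacle and is handled by ruling out the residual configuration $\exp(D)>\exp(D')$ with $k>k'$ via a direct incompatibility analysis of Fact~\ref{indexx} on both given rational maps.

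In the reduced setting one obtains a rational map $X(p^{k'};D')\dashrightarrow X(p^{k'};D)$ (the edge case $k'\ge v_{p}(\deg D)$, where the target degenerates, must be treated separately). Lemma~\ref{lemma} applied with $n=1$ then produces an integer $i$ with $v_{p}(\gcd(i,p^{k'}))=0$ and $[D]=[D']^{i}$ in $\Br(F)$. Coprimality of $i$ to $p$ gives $[D']\in\langle[D]\rangle$ as well, hence $\langle[D]\rangle=\langle[D']\rangle$, and consequently $\exp(D)=\exp(D')$ and $\deg(D)=\deg(D')$.

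Finally, to conclude $k=k'$, re-apply Fact~\ref{indexx} now that $\langle[D]\rangle=\langle[D']\rangle$ is known. Writing $[D']=[D]^{i}$ with $\gcd(i,p)=1$, the minimum in the index reduction formula for $\ind(D'_{F(X(p^{k};D))})$ is attained only at integers $j$ coprime to $p$ with $D'\otimes D^{\otimes -j}$ split, giving $\mu=p^{k}$; any other $j$ produces a strictly larger $\mu$, since $\ind(D^{\otimes r})=\deg(D)$ whenever $\gcd(r,p)=1$ and otherwise the exponent-to-index bound forces at least $p^{k+1}$. Hence $\ind(D'_{F(X(p^{k};D))})=p^{k}$ exactly, and the given rational map $X(p^{k};D)\dashrightarrow X(p^{k'};D')$ forces $p^{k}\mid p^{k'}$, so $k\le k'$; the symmetric calculation yields $k'\le k$. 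The main difficulty throughout is the compatibility step of paragraph~2: the two WLOG assumptions may conflict, and the incompatibility argument ruling out $\exp(D)>\exp(D')$ with $k>k'$ requires careful bookkeeping of $p$-adic valuations in the index reduction formula on both rational maps.
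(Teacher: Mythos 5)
Your $(\Leftarrow)$ direction is fine and essentially matches the paper's (the paper funnels it through Theorem~\ref{classn1}(ii)$\Rightarrow$(iii), which is the same index-reduction computation via Fact~\ref{indexx}). The issue is $(\Rightarrow)$. The paper does not attempt the combinatorial reduction you describe; instead it invokes the $p$-incompressibility of generalized Severi-Brauer varieties of $p$-primary ideals in $p$-primary division algebras, established in \cite{upper}. Two rational maps in both directions force the canonical $p$-dimensions of $X(p^k;D)$ and $X(p^{k'};D')$ to agree; incompressibility then upgrades this to an equality of actual dimensions, $p^k(p^n-p^k)=p^{k'}(p^{n'}-p^{k'})$, which immediately yields $k=k'$ and $n=n'$. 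Only at that point is Theorem~\ref{classn1}(i)$\Rightarrow$(ii) applied, and Lemma~\ref{lemma} enters with the same $k$ on both sides by construction, so the compatibility problem you run into never arises.

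By contrast, the centerpiece of your plan — the reduction to a same-$k$ rational map so that Lemma~\ref{lemma} (with $n=1$) becomes applicable — is exactly where a genuine gap sits. You correctly identify that you must simultaneously arrange $k\le k'$ and $\exp(D)\ge\exp(D')$, and that these two normalizations can be in conflict after the single available swap; but the "direct incompatibility analysis" promised to exclude the residual configuration ($k>k'$ with $\exp(D)>\exp(D')$) is never carried out, and it is not obvious it can be: from $X(p^k;D)\dashrightarrow X(p^{k'};D')$ with $k>k'$ one can promote the \emph{target} to level $p^k$, but then Lemma~\ref{lemma} would require $\exp(D')\ge\exp(D)$, which is precisely what fails; promoting the source in the other map does not work either. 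Likewise the flagged "edge case" $k'\ge v_p(\deg D)$ (so that $X(p^{k'};D)$ is not even a valid target) is left unresolved, and before you know $\deg(D)=\deg(D')$ you cannot rule it out. Until those two steps are actually supplied, the argument does not close; the paper's route via incompressibility is the clean way to obtain $k=k'$ (and equality of degrees) \emph{before} touching the Brauer-group computation.
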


Theorem \ref{classratsb} generalizes \cite[Theorem 9.3]{amitsur}, which corresponds to the case where the two integers $k$ and $k'$ are equal to $0$. To prove theorem \ref{classratsb} we will need the following result.

\begin{theo}\label{classn1}Let $p$ be a prime and $D$, $D'$ be two division algebras of degree  $p^s$ over $F$. The following assertions are equivalent :
\begin{enumerate}
\item for some $0\leq k<s$, there are two rational maps $X(p^k;D)\ratrat X(p^k;D')$;
\item the classes of $D$ and $D'$ generate the same subgroup in the Brauer group of $F$;
\item for any $0\leq k<s$, there are two rational maps $X(p^k;D)\ratrat X(p^k;D')$.
\end{enumerate}
\end{theo}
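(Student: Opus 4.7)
The structure of the proof will be trivially (iii) $\Rightarrow$ (i), and then (i) $\Rightarrow$ (ii) and (ii) $\Rightarrow$ (iii), both specialising the tools just set up (Lemma \ref{lemma} and Fact \ref{indexx}) to the case $n=1$.

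For (i) $\Rightarrow$ (ii), the hypothesis is symmetric in $D$ and $D'$, so I may first swap them to assume $\exp(D) \geq \exp(D')$. Lemma \ref{lemma} then applies directly to the rational map $X(p^k;D') \dashrightarrow X(p^k;D)$ with single source algebra $D'$: its conclusion $\sum_j v_p(\gcd(i_j,p^k)) = k(n-1)$ collapses to $\gcd(i,p) = 1$, and yields $[D]=[D']^i$ in $\Br(F)$. Since $\exp(D')$ is a power of $p$, such an $i$ is invertible modulo $\exp(D')$, whence $[D']$ also lies in $\langle [D]\rangle$, proving (ii).

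For (ii) $\Rightarrow$ (iii), the common subgroup $\langle [D]\rangle = \langle [D']\rangle$ is cyclic of $p$-power order, so one may write $[D] = [D']^i$ with $\gcd(i,p) = 1$. Fixing any $0 \leq k < s$ and setting $E = F(X(p^k;D'))$, evaluating $\mu^k_{D,D'}$ at this $i$ gives $\mu^k_{D,D'}(i) = p^k \cdot \ind(D \otimes D'^{\otimes -i}) = p^k$, because the tensor product is split. Fact \ref{indexx} then forces $\ind(D_E) \leq p^k$, and since this index is a power of $p$ it actually divides $p^k$; hence $X(p^k;D)$ has an $E$-rational point and the desired rational map $X(p^k;D') \dashrightarrow X(p^k;D)$ exists. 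The reverse rational map is produced symmetrically after writing $[D']$ as a $p$-coprime power of $[D]$.

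The genuine technical work has already been absorbed into Lemma \ref{lemma}; the only delicate point left here is the symmetry manoeuvre in (i) $\Rightarrow$ (ii), required precisely because the exponent hypothesis of Lemma \ref{lemma} forces its application in whichever direction places the algebra of larger exponent in the target slot.
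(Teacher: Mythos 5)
Your overall structure matches the paper's: (i)$\Rightarrow$(ii) via Lemma \ref{lemma} after swapping $D$ and $D'$ to put the larger-exponent algebra in the target slot, and (ii)$\Rightarrow$(iii) via the index reduction formula, with (iii)$\Rightarrow$(i) trivial. The (ii)$\Rightarrow$(iii) argument is essentially the paper's (the paper phrases it as $\ind(D_E)=\ind(D'_E)$ for all $E$, you phrase it via an explicit evaluation of $\mu^k_{D,D'}$; same content).

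There is one genuine, if small, gap in (i)$\Rightarrow$(ii). You assert that the constraint $\sum_j v_p(\gcd(i_j,p^k)) = k(n-1)$ from Lemma \ref{lemma} ``collapses to $\gcd(i,p)=1$'' for $n=1$. That is true only for $k>0$: when $k=0$ the constraint reads $v_p(\gcd(i,1))=0$, which is vacuous, so the lemma gives $[D]=[D']^{i}$ with no control on $i$. Your next step (``$i$ is invertible modulo $\exp(D')$'') then has no support in the case $k=0$, which is allowed by assertion (i). The paper avoids this by quoting Proposition \ref{class} for $k=0$, whose proof extracts $[D]\in\langle[D']\rangle$ and $[D']\in\langle[D]\rangle$ symmetrically from the two rational maps, with no coprimality needed. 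Alternatively you could patch your argument directly: since you have already arranged $\exp(D)\geq\exp(D')$, if $p\mid i$ then $[D]=[D']^{i}$ would force $\exp(D)\mid\exp(D')/p<\exp(D)$, impossible for a non-split $D$, so $\gcd(i,p)=1$ holds for $k=0$ as well. Either fix is one line, but as written the step does not follow from the lemma alone.
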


\begin{proof}
We show the implications (i) $\Rightarrow$ (ii) $\Rightarrow$ (iii) and we begin with \implyss{i}{ii}. The case where $k=0$ corresponds to the \implyss{i}{ii} of proposition \ref{class}, with $n=m=1$. To show that the implication also hold if $k$ is not $0$, we may exchange $D$ and $D'$ and thus assume that the exponent of $D$ is greater than the exponent of $D'$. Lemma \ref{lemma} applied with $n=1$ gives us an integer $i_1$ coprime to $p$ such that in the Brauer group of $F$, $[D]=[D']^{i_1}$. It is then clear that the classes of $D$ and $D'$ generate the same subgroup of the Brauer group of $F$.

Condition $\mbox{(ii)}$ implies $\mbox{(iii)}$. Indeed if the classes of $D$ and $D'$ generate the same subgroup of $\Br(F)$, then for any field extension $E/F$ the equality $\ind(D_E)=\ind(D'_E)$ holds. In particular for any $0\leq k < n$ the variety $X(p^k;D)$ has a rational point over the function field of $X(p^k;D')$ and vice versa. There are therefore two rational maps $X(p^k;D)\ratrat X(p^k;D')$.
\end{proof}

\begin{proof}[Proof of theorem \ref{classratsb}]If the subgroups generated by the classes of $D$ and $D'$ coincide, then for any $0\leq k<n$ there are two rational maps $X(p^k;D)\ratrat X(p^k;D')$ by implication \implyss{ii}{iii} of theorem \ref{classn1}.

For the other implication, we first observe that if there are two rational maps in both directions $X(p^k;D)\ratrat X(p^{k'};D')$, then $k=k'$ and $\deg(D)=\deg(D')$. Indeed if we set $\deg(D)=p^n$ and $\deg(D')=p^{n'}$, the existence of two rational maps $X(p^k;D)\ratrat X(p^{k'};D')$ implies that the canonical dimension of those varieties are equal. By \cite{upper}, those varieties are incompressible and thus the dimension of $X(p^k;D)$ (which is $p^k(p^n-p^k)$) is equal to the dimension of $X(p^{k'};D')$. The equality $p^k(p^n-p^k)=p^{k'}(p^{n'}-p^{k'})$ then implies that $k=k'$, and $n=n'$. It remains to use assertion \implyss{i}{ii} of theorem \ref{classn1} to observe that the classes of $D$ and $D'$ generate the same subgroup of the Brauer group of $F$.
\end{proof}
\vspace{0,2cm}

\textbf{Products of generalized Severi-Brauer varieties}. As for the case of generalized Severi-Brauer varieties, we would like to completely determine products of generalized Severi-Brauer varieties of $p$-primary ideals in $p$-primary division algebras up to rational maps in both directions. We will see that under some assumptions on the exponents of the algebras, there can still recover some results in terms of the Brauer classes of the underlying algebras. We will besides produce counterexamples showing that the classification does not rely on the subgroups of $\Br(F)$ outside this setting.

\begin{theo}\label{prodexp}Let $p$ be a prime and $D_1,...,D_n,D'_1,...,D'_m$ be division algebras of degree $p^s$. Consider an integer $0\leq k <s$ and assume that $\exp(D_1)$$=$$...$$=$$\exp(D_n)$ and $\exp(D'_1)$$=$$...$$=$$\exp(D'_m)$. The following assertions are equivalent :
\begin{enumerate}
\item There are two rational maps 
$$X(p^k;D_1)\times ...\times X(p^k;D_n)\ratrat X(p^k;D'_1)\times...\times X(p^k;D'_m);$$
\item for any $1\leq i \leq n$ and any $1\leq j \leq m$, there are integers $\alpha_{i,1},...,\alpha_{i,m}$, and $\beta_{j,1},...,\beta_{j,n}$ such that in $\Br(F)$
$$[D_i]=[D'_1]^{\alpha_{i,1}}...[D'_m]^{\alpha_{i,m}}\mbox{ and }[D'_j]=[D_1]^{\beta_{j,1}}...[D_n]^{\beta_{j,n}}$$
with $\sum_{u=1}^mv_p(\gcd(\alpha_{i,u},p^k))=(m-1)k$ and $\sum_{v=1}^nv_p(\gcd(\alpha_{j,v},p^k))=(n-1)k$.
\end{enumerate}
\end{theo}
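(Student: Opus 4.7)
My plan is to prove each implication separately, leveraging Fact \ref{indexx} for \implyss{ii}{i} and Lemma \ref{lemma} for \implyss{i}{ii}, with an intermediate step in the latter needed to synchronize the common exponents of the two families.

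For \implyss{ii}{i}, the input Brauer relations $[D_i]=[D'_1]^{\alpha_{i,1}}\cdots[D'_m]^{\alpha_{i,m}}$ make each algebra $D_i\otimes D'_1{}^{\otimes-\alpha_{i,1}}\otimes\cdots\otimes D'_m{}^{\otimes-\alpha_{i,m}}$ Brauer-trivial, hence of index $1$. Plugging this together with the identity $\sum_u v_p(\gcd(\alpha_{i,u},p^k))=(m-1)k$ into the formula for $\mu^{k,\ldots,k}_{D_i,D'_1,\ldots,D'_m}$ yields the value $p^{mk-(m-1)k}=p^k$. By Fact \ref{indexx} the index of $D_i$ over $E=F(X(p^k;D'_1)\times\cdots\times X(p^k;D'_m))$ divides $p^k$, so $X(p^k;D_i)$ carries an $E$-point. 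Since $E$-points of the target are tuples of $E$-points of its factors, assembling these points produces the rational map in one direction; the opposite rational map is obtained by the symmetric computation starting from the $[D'_j]$ relations.

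For \implyss{i}{ii}, I would first observe that the conclusion is symmetric in the two families, so we may assume $\exp(D_1)\geq\exp(D'_1)$. Thanks to the constancy of exponents within each family, this gives $\exp(D_i)\geq\exp(D'_u)$ for every pair $(i,u)$. Composing the given rational map with the $i$-th projection produces
$$X(p^k;D'_1)\times\cdots\times X(p^k;D'_m)\dashrightarrow X(p^k;D_i),$$
and Lemma \ref{lemma} directly supplies integers $\alpha_{i,1},\ldots,\alpha_{i,m}$ with $\sum_u v_p(\gcd(\alpha_{i,u},p^k))=(m-1)k$ and $[D_i]=[D'_1]^{\alpha_{i,1}}\cdots[D'_m]^{\alpha_{i,m}}$. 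These relations force each $[D_i]$ into the subgroup of $\Br(F)$ generated by $[D'_1],\ldots,[D'_m]$, whose exponent equals $\exp(D'_1)$; combined with $\exp(D_1)\geq\exp(D'_1)$ this upgrades the inequality to an equality $\exp(D_1)=\exp(D'_1)$, whereupon Lemma \ref{lemma} becomes applicable in the reverse direction and yields the desired $\beta_{j,1},\ldots,\beta_{j,n}$ from the reverse projection.

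I expect the main obstacle to be precisely this exponent-matching step: Lemma \ref{lemma} imposes an asymmetric hypothesis on exponents, so to recover the symmetric conclusion of the theorem one must first promote the inequality between the two common exponents to an equality using the Brauer relations obtained from the first application. The uniformity hypothesis on exponents within each family is exactly what permits this bootstrap, and the counterexamples advertised by the author presumably exhibit failures of this mechanism when the uniformity is dropped.
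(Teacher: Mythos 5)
Your proof is correct and follows essentially the same route as the paper: Fact \ref{indexx} together with the evaluation $\mu^{k,\ldots,k}_{D_i,D'_1,\ldots,D'_m}(\alpha_{i,1},\ldots,\alpha_{i,m})=p^k$ for \implyss{ii}{i}, and Lemma \ref{lemma} applied to the projections for \implyss{i}{ii}, with the same exponent-bootstrap (from $\exp(D_1)\geq\exp(D'_1)$ to equality via the Brauer relations and the uniformity of exponents) that the paper uses before invoking the lemma in the reverse direction. The closing remark correctly identifies the role of the uniformity hypothesis and matches the paper's intent behind Examples \ref{ex1} and \ref{ex2}.
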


\begin{proof}\implyss{i}{ii} We may exchange the division algebras $D_i$'s by the $D'_j$'s and thus assume that $\exp(D_1)\geq \exp(D'_1)$. By assumption (i), for any integer $1\leq i \leq n$ there is a rational map $X(p^k;D'_1)\times ...\times X(p^k;D'_m)\dashrightarrow X(p^k;D_i)$. Lemma \ref{lemma} then gives us for any such $i$ a sequence $\alpha_{i,1},...,\alpha_{i,m}$ satisfying $\sum_{u=1}^mv_p\gcd(\alpha_{i,u},p^k)=(m-1)k$ and such that the equality $[D_i]=[D'_1]^{\alpha_{i,1}}...[D'_m]^{\alpha_{i,m}}$ holds in the Brauer group of $F$.

It follows that $\exp(D_1)=\exp(D'_1{}^{\alpha_{1,1}}\otimes ...\otimes D'_m{^{\alpha_{1,m}}})\leq \lcm_{i=1,...,m}\exp(D'_i)$, and consequently $\exp(D_1)=\exp(D'_1)$. The rational maps $X(p^k;D_1)\times ...\times X(p^k;D_n)\dashrightarrow X(p^k;D'_j)$ for any $1\leq j\leq m$ together with lemma \ref{lemma} thus give the needed sequence $(\beta_{j,v})_{v=1,...,n}$.

\vspace{0,2cm}

\implyss{ii}{i} We denote by $E$ (resp. by $E'$) the function field of $X(p^k;D_1)\times...\times X(p^k;D_n)$ (resp. of the variety $X(p^k;D'_1)\times...\times (p^k;D'_m)$). For any $0\leq i \leq n$, $\mu_{D_i,D'_1,...,D'_m}^{k,...,k}(\alpha_{i,1},...,\alpha_{i,m})$ is equal to $p^k$. The variety $X(p^k;D_i)_{E'}$ therefore has a rational point by fact \ref{indexx} and there is a rational map $X(p^k;D'_1)\times ...\times (p^k;D'_m)\dashrightarrow X(p^k;D_1)\times... \times (p^k;D_n)$. The same procedure with the sequences $(\beta_{j,v})_{j,v}$ shows that for any $0\leq j \leq m$ the variety $X(p^k;D_j)_E$ has a rational point hence we get the rational map in the other direction.
\end{proof}

\begin{cor}\label{corprodexp}Let $p$ be a prime and $D_1,...,D_n,D'_1,...,D'_m$ be division algebras of degree $p^s$. Assume that $0\leq k <s$, $\exp(D_1)$$=$$...$$=$$\exp(D_n)$ and also $\exp(D'_1)$$=$$...$$=$$\exp(D'_m)$. If there are two rational maps $X(p^k;D_1)\times ...\times X(p^k;D_n)\ratrat X(p^k;D'_1)\times...\times X(p^k;D'_m)$, then there are also rational maps $X(1;D_1)\times ...\times X(1;D_n)\ratrat X(1;D'_1)\times...\times X(1;D'_m)$.
\end{cor}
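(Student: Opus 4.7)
The plan is to combine the algebraic criterion provided by Theorem \ref{prodexp} for the $p^k$-case with the algebraic criterion of Proposition \ref{class} for the classical case. Both criteria express the existence of rational maps in both directions in terms of relations in the Brauer group of $F$ between the classes of the underlying division algebras, and the criterion in Theorem \ref{prodexp} is strictly stronger than the one in Proposition \ref{class} since it further constrains the $p$-adic valuations of the exponents appearing in the relations.

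First, I would apply the implication \implyss{i}{ii} of Theorem \ref{prodexp} to the given pair of rational maps $X(p^k;D_1)\times\cdots\times X(p^k;D_n)\ratrat X(p^k;D'_1)\times\cdots\times X(p^k;D'_m)$. This yields, for each $i$, integers $\alpha_{i,1},\ldots,\alpha_{i,m}$ such that $[D_i]=[D'_1]^{\alpha_{i,1}}\cdots [D'_m]^{\alpha_{i,m}}$ in $\Br(F)$, and for each $j$, integers $\beta_{j,1},\ldots,\beta_{j,n}$ such that $[D'_j]=[D_1]^{\beta_{j,1}}\cdots[D_n]^{\beta_{j,n}}$ in $\Br(F)$ (the extra valuation conditions on the $\alpha$'s and $\beta$'s are not needed here and will simply be discarded).

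These two sets of relations say exactly that each $[D_i]$ lies in the subgroup of $\Br(F)$ generated by $[D'_1],\ldots,[D'_m]$ and conversely each $[D'_j]$ lies in the subgroup generated by $[D_1],\ldots,[D_n]$; hence the subgroups of $\Br(F)$ generated by $\{[D_i]\}_{i=1}^{n}$ and $\{[D'_j]\}_{j=1}^{m}$ coincide. I would then invoke the implication \implyss{ii}{i} of Proposition \ref{class}, applied to the sequences $D_1,\ldots,D_n$ and $D'_1,\ldots,D'_m$ (all of degree $p^s$), to conclude that there are two rational maps $X(1;D_1)\times\cdots\times X(1;D_n)\ratrat X(1;D'_1)\times\cdots\times X(1;D'_m)$, as required.

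There is no real obstacle here: the corollary is a straightforward extraction of the weaker Brauer-theoretic information from Theorem \ref{prodexp} followed by a direct application of Proposition \ref{class}. The only point worth noting is that the equal-exponent assumptions on the $D_i$'s and on the $D'_j$'s, as well as the integer $k$, play their role only in invoking Theorem \ref{prodexp}; for the final application of Proposition \ref{class} one needs nothing beyond the common degree $p^s$ of all the algebras involved.
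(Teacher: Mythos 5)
Your proof is correct and follows exactly the paper's argument: the paper's one-line proof, ``Assertion (ii) of theorem \ref{prodexp} implies assertion (ii) of proposition \ref{class},'' is precisely the chain you spell out, namely applying \implyss{i}{ii} of Theorem \ref{prodexp}, dropping the extra valuation constraints to obtain the coincidence of subgroups of $\Br(F)$, and then invoking \implyss{ii}{i} of Proposition \ref{class}.
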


\begin{proof}Assertion (ii) of theorem \ref{prodexp} implies assertion (ii) of proposition \ref{class}.
\end{proof}

We would like to point out that the assertion (ii) of theorem \ref{prodexp} is strictly more restrictive than the fact that the subgroups of $\Br(F)$ generated by the classes of $D_1$,...,$D_n$ and $D'_1$,...,$D'_m$ coincide. This question is addressed in the next section. 

\vspace{0,2cm}

\textbf{Some counterexamples} Theorem \ref{classn1} shows that for $p$-primary division algebras of same degree $D$ and $D'$, one may check that there are two rational maps between some generalized Severi-Brauer varieties $X(p^k;D)\ratrat X(p^k;D')$ (with $0\leq k <v_p(\deg(D))$) by looking at \emph{classical} Severi-Brauer varieties, whose splitting behavior is much simpler. We now show that the situation is more intricate for products of generalized Severi-Brauer varieties. These counterexamples show that the assumptions of theorem \ref{prodexp} and \ref{corprodexp} are in fact minimal and that there is no hope to find an analogue of theorem \ref{classn1} for products of generalized Severi-Brauer varieties.

\begin{ex}\label{ex1}Consider three biquaternion division algebras $\Delta_1:=D_1\otimes D_2$, $\Delta_2:=D_1\otimes D_3$ and $\Delta_3:=D_2\otimes D_3$ over $F$. The subgroup of $\Br(F)$ generated by the classes $[\Delta_1]$ and $[\Delta_2]$ clearly coincides with the subgroup generated by $[\Delta_1]$ and $[\Delta_3]$. We then know by proposition \ref{class} that there are two rational maps $X(1;\Delta_1)\times X(1;\Delta_2)\ratrat X(1;\Delta_1)\times X(1;\Delta_3)$, but one can check that there are no rational maps between the associated products of generalized Severi-Brauer varieties since there is no rational map $X(2;\Delta_1)\times X(2;\Delta_2)\dashrightarrow X(2;\Delta_3)$.
\end{ex}

\begin{ex}\label{ex2}Setting $F=\mathbb{Q}(x_1,x_2,x_3)$, we now show that there is no criterion for the existence of rational maps in both directions between products of generalized Severi Brauer varieties in $p$-primary division algebras in terms of the subgroups of $\Br(F)$. Consider the field extension $\mathbb{Q}(\zeta_5)/\mathbb{Q}$ generated by a primitive $5$-th root of the unity $\zeta_5$. The cyclic division algebra $D_1=(\mathbb{Q}(\zeta_5)(x_1,x_2,x_3),x_1)$ over $F$ is of index $4$ and exponent $4$. We may also consider the two biquaternion algebras $D_2=(\mathbb{Q}(\sqrt{5})(x_1,x_2,x_3),x_1)\otimes (\mathbb{Q}(\sqrt{2})(x_1,x_2,x_3),x_2)$ and $D_3=(\mathbb{Q}(\sqrt{5})(x_1,x_2,x_3),x_1)\otimes (\mathbb{Q}(\sqrt{3})(x_1,x_2,x_3),x_3)$. The central simple algebra $D_2$ (resp. $D_3$) is a division algebra of index $4$ and exponent $2$ since the extensions $\mathbb{Q}(\sqrt{5})$ and $\mathbb{Q}(\sqrt{2})$ (resp. $\mathbb{Q}(\sqrt{3})$) are linearly disjoint. The subgroups generated by the classes of $D_1$, $D_2$ and the classes of $D_1$, $D_3$ in the Brauer group of $F$ do not coincide, but there are two rational maps $X(2;D_1)\times X(2;D_2)\ratrat X(2;D_1)\times X(2;D_3)$. The assumption on the exponents of the algebras is therefore mandatory in both theorem \ref{prodexp} and corollary \ref{corprodexp}.
\end{ex}

\section{Consequences on motives}

In this section we will relate the previous results on the classification of products of generalized Severi-Brauer varieties up to rational maps in both directions to the classification of the upper $p$-motives of $\PGL_1(A_1)$$\times$$...$$\times$$\PGL_1(A_n)$-homogeneous varieties.

\begin{notation}
Let $A_1,...,A_n$ be a sequence of central simple algebras over the field $F$. We denote by $\mathfrak{X}^{p}_{A_1,...,A_n}$ the set of all the isomorphism classes of twists of indecomposable direct summands of projective $\PGL_1(A_1)\times ...\times \PGL_1(A_n)$-homogeneous varieties in $\CM(F;\mathbb{F}_p)$.
\end{notation}

\begin{lem}\label{lemma2}Let $p$ be a prime and $A_1,...,A_n$ be central simple algebras of $p$-primary index. Assume that $X=X(k_1;A_1)\times...\times X(k_n;A_n)$ is a product of generalized Severi-Brauer varieties. The variety $X$ has a rational point if and only if $X$ has a $0$-cycle of degree coprime to $p$.
\end{lem}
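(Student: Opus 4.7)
My plan is to reduce, via the classical description of rational points on a generalized Severi-Brauer variety in terms of right ideals, to the index-preservation property of $p$-primary central simple algebras under scalar extensions of degree prime to $p$.

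The ``only if'' direction is trivial, since a rational point yields a $0$-cycle of degree $1$. For the converse, I would start with a $0$-cycle $z=\sum_i n_i[x_i]$ on $X$ whose degree $\sum_i n_i[F(x_i):F]$ is coprime to $p$. Reducing modulo $p$, at least one summand is non-zero, so there is a closed point $x\in X$ whose residue field $E:=F(x)$ satisfies $\gcd([E:F],p)=1$. This closed point furnishes a morphism $\Spec(E)\to X$, i.e.\ an $E$-rational point of $X$, and since $X$ is a product, each factor $X(k_i;A_i)$ inherits an $E$-rational point.

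Next I would invoke the standard fact that $X(k_i;A_i)(E)\neq\emptyset$ if and only if $\ind((A_i)_E)$ divides $k_i$ (the reduced dimensions of the right ideals of a central simple algebra are exactly the multiples of its index). This gives $\ind((A_i)_E)\mid k_i$ for every $i$. The remaining ingredient is to show that $\ind((A_i)_E)=\ind(A_i)$. One divisibility is automatic from restriction, and it also shows $\ind((A_i)_E)$ is $p$-primary; the reverse divisibility comes from the classical inequality $\ind(A_i)\mid[E:F]\cdot\ind((A_i)_E)$, which combined with $\gcd([E:F],p)=1$ forces $\ind(A_i)\mid\ind((A_i)_E)$.

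Putting everything together, $\ind(A_i)\mid k_i$ for every $i$, so each factor $X(k_i;A_i)$ admits an $F$-rational point and so does $X$. The only non-formal step is the index-preservation lemma $\ind(A_E)=\ind(A)$ under prime-to-$p$ scalar extensions of a $p$-primary algebra; it is entirely standard, and I do not anticipate any serious obstacle in this argument.
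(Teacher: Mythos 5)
Your proposal is correct and follows essentially the same route as the paper: extract a closed point whose residue field $E$ has degree prime to $p$, use the index criterion to get $\ind((A_i)_E)\mid k_i$, and then invoke index preservation for $p$-primary algebras under prime-to-$p$ extensions to descend to $F$. You merely spell out a couple of the steps (the choice of $E$ from the $0$-cycle, and the standard divisibility $\ind(A)\mid[E:F]\cdot\ind(A_E)$) that the paper leaves implicit.
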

\begin{proof}It is clear that the existence of a rational point on $X$ gives a $0$-cycle of degree coprime to $p$. Conversely if $X=X(k_1;A_1)\times...\times X(k_n;A_n)$ has a $0$-cycle of degree coprime to $p$, then there is a field extension $E/F$ of degree coprime to $p$ such that $X_E$ has a rational point. We thus get a rational point on each $X(k_j,A_j)_E$ but since the central simple algebras $A_1,...,A_n$ are $p$-primary, their index remains the same over $E$. Therefore each variety $X(k_j,A_j)$ has a rational point and $X$ has a rational point.
\end{proof}

\begin{prop}\label{link}
Let $p$ be a prime and $D_1,...,D_n$, $D'_1,...,D'_n$ be $p$-primary division algebras. The following assertions are equivalent :
\begin{enumerate}
\item the upper $p$-motives $M_{D_1,...,D_n}^{k_1,...,k_n}$ and $M_{D'_1,...,D'_m}^{k'_1,...,k'_m}$ are isomorphic;
\item there are two rational maps
$$X(p^{k_1};D_1)\times...\times X(p^{k_n};D_n)\ratrat X(p^{k'_1};D'_1)\times...\times X(p^{k'_m};D'_m).$$
\end{enumerate}
\end{prop}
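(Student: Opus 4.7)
The plan is to reduce the isomorphism of the two upper $p$-motives to the existence of rational points on each variety over the function field of the other, and then to use lemma \ref{lemma2} to translate between rational points and $0$-cycles of degree prime to $p$. Set $X=X(p^{k_1};D_1)\times\cdots\times X(p^{k_n};D_n)$ and $Y=X(p^{k'_1};D'_1)\times\cdots\times X(p^{k'_m};D'_m)$, so that $M_{D_1,\dots,D_n}^{k_1,\dots,k_n}$ and $M_{D'_1,\dots,D'_m}^{k'_1,\dots,k'_m}$ are by definition the upper $p$-motives $U(X)$ and $U(Y)$. Both $X$ and $Y$ are projective homogeneous under products of projective linear groups, hence satisfy Rost nilpotence and have a well-defined upper $p$-motive.

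For the direction (ii)$\Rightarrow$(i), a rational map $Y\dashrightarrow X$ is the same as a rational point of $X_{F(Y)}$, which of course gives a $0$-cycle of degree $1$ on $X_{F(Y)}$, and similarly for the other direction. By the fundamental criterion for isomorphism of upper motives from \cite{upper} (the existence of mutual correspondences of multiplicity $1$ modulo $p$, equivalently mutual $0$-cycles of degree coprime to $p$ after passing to the function field), these $0$-cycles of degree $1$ immediately yield $U(X)\cong U(Y)$.

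For the converse (i)$\Rightarrow$(ii), the same criterion from \cite{upper} produces, from the isomorphism $U(X)\cong U(Y)$, a $0$-cycle of degree coprime to $p$ on $X_{F(Y)}$ and one on $Y_{F(X)}$. Here I would invoke lemma \ref{lemma2}: the variety $X_{F(Y)}$ is a product of generalized Severi-Brauer varieties attached to the base-changed algebras $(D_j)_{F(Y)}$, whose indices divide $\ind(D_j)$ and hence are still $p$-primary; the lemma then upgrades a $0$-cycle of $p$-coprime degree to an actual rational point. This rational point is precisely a rational map $Y\dashrightarrow X$, and symmetrically one obtains $X\dashrightarrow Y$.

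The main obstacle, such as it is, lies in invoking the right version of Karpenko's upper-motive criterion and checking that lemma \ref{lemma2} genuinely applies after base change to a function field: one must verify that the base-changed algebras remain $p$-primary (immediate since the index can only drop) so that passing from a $0$-cycle of degree coprime to $p$ to a rational point is legitimate. Once this is noted, the argument is a clean back-and-forth: upper motive isomorphism $\Leftrightarrow$ mutual $0$-cycles of $p$-coprime degree (by \cite{upper}) $\Leftrightarrow$ mutual rational points over the opposite function field (by lemma \ref{lemma2}) $\Leftrightarrow$ mutual rational maps.
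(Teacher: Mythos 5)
Your proof is correct and follows essentially the same route as the paper: invoke Karpenko's upper-motive criterion (Corollary 2.15 of \cite{upper}) to reduce (i) to mutual $0$-cycles of degree coprime to $p$ over the opposite function fields, then apply Lemma \ref{lemma2} to trade those $0$-cycles for rational points and hence rational maps. Your extra remark that the base-changed algebras remain of $p$-primary index so that Lemma \ref{lemma2} (which is stated for central simple algebras of $p$-primary index, not just division algebras) still applies is a sound and useful sanity check, but it is the same argument the paper has in mind.
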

\begin{proof}
By \cite[Corollary 2.15]{upper}, the upper $p$-motives of $X=X(p^{k_1};D_1)\times...\times X(p^{k_n};D_n)$ and $Y=X(p^{k'_1};D'_1)\times...\times X(p^{k'_m};D'_m)$ are isomorphic if and only if both $X_{F(Y)}$ and $Y_{F(X)}$ have a $0$-cycle of degree coprime to $p$. Lemma \ref{lemma2} states that this is equivalent to the fact that both $X_{F(Y)}$ and $Y_{F(X)}$ have a rational point.
\end{proof}

\textbf{Classification of $\mathfrak{X}^{p}_{A}$.} The previous discussion gives the following complete classification of the indecomposable $p$-motives of $\PGL_1(A)$-homogeneous varieties. This classification leads to a startling rigidity of the sets $\mathfrak{X}^p_A$ with respect to the parameter $A$, described in theorem \ref{dichoto}.

\begin{theo}\label{classmotsb}Let $p$ be a prime and $D, D'$ be $p$-primary division algebras over $F$. Consider two integers $0\leq k <v_p(\deg(D))$ and $0\leq k' <v_p(\deg(D'))$. The upper $p$-motives $M_{k,D}$ and $M_{k,D'}$ are isomorphic if and only if $k=k'$ and the subgroups generated by the classes of $D$ and $D'$ in $\Br(F)$ coincide.
\end{theo}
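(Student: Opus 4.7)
The plan is to derive Theorem~\ref{classmotsb} as an immediate consequence of the two main results already established in the excerpt: Proposition~\ref{link} (which translates motivic isomorphism into rational maps in both directions) and Theorem~\ref{classratsb} (which classifies such pairs of rational maps in terms of the Brauer subgroup generated by the underlying algebras and the ideal-dimension parameter).

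First I would apply Proposition~\ref{link} in the case $n=m=1$: the upper $p$-motives $M^{k}_{D}$ and $M^{k'}_{D'}$ are isomorphic if and only if there exist two rational maps
$$X(p^{k};D)\ratrat X(p^{k'};D').$$
Then I would invoke Theorem~\ref{classratsb}, which asserts that such rational maps in both directions exist if and only if $k=k'$ and the classes of $D$ and $D'$ generate the same subgroup of $\Br(F)$. Concatenating these two equivalences yields exactly the statement of Theorem~\ref{classmotsb}.

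Since both ingredients are already proved in the excerpt, there is no genuine obstacle here; the argument is essentially a one-line citation chain. The only thing worth checking is that the hypotheses match: the bounds $0\le k<v_p(\deg(D))$ and $0\le k'<v_p(\deg(D'))$ guarantee that $p^{k}<\deg(D)$ and $p^{k'}<\deg(D')$, so the generalized Severi-Brauer varieties $X(p^{k};D)$ and $X(p^{k'};D')$ are the non-trivial varieties to which Proposition~\ref{link} and Theorem~\ref{classratsb} apply, and one verifies that the condition $k=k'$ (which forces $\deg(D)=\deg(D')$ via the dimension/incompressibility argument used inside the proof of Theorem~\ref{classratsb}) is compatible with the two algebras being allowed to have different degrees in the statement.
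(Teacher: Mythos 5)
Your proposal is correct and follows exactly the same argument as the paper: apply Proposition~\ref{link} (with $n=m=1$) to reduce the motivic isomorphism to rational maps in both directions, then invoke Theorem~\ref{classratsb} to get the criterion. Your extra remark about the hypotheses matching and $k=k'$ forcing $\deg(D)=\deg(D')$ via incompressibility is a sensible sanity check, though the paper leaves it implicit.
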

\begin{proof}Proposition \ref{link} asserts that the fact that $M_{k,D}$ and $M_{k',D}$ are isomorphic is equivalent to the existence of rational maps in both directions between the associated generalized Severi-Brauer varieties. Theorem \ref{classratsb} then gives the expected criterion.
\end{proof}

\begin{theo}[{Motivic dichotomy of $\PGL_1$}]\label{dichoto} Let $A$ and $A'$ be two central simple algebras over $F$. Then either $\mathfrak{X}^p_A\cap \mathfrak{X}^p_{A'}$ is reduced to the Tate motives or $\mathfrak{X}^p_A=\mathfrak{X}^p_{A'}$.
\end{theo}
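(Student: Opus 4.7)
Let $D$ and $D'$ denote the division algebras Brauer-equivalent to the $p$-primary components of $A$ and $A'$ respectively. By Proposition~\ref{upp}, $\mathfrak{X}^p_A$ is exactly the union of the Tate motives and all Tate twists of the upper motives $M_{k,D}$ for $0\leq k<v_p(\deg(D))$, and similarly for $A'$. If $D$ or $D'$ is split then the corresponding set reduces to the Tate motives, so does the intersection, and we are in the first alternative. I assume from now on that both $D$ and $D'$ are non-split and that $\mathfrak{X}^p_A\cap \mathfrak{X}^p_{A'}$ contains a non-Tate motive $N$.

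Write $N\cong M_{k,D}[i]\cong M_{k',D'}[j]$ for some valid $k,k',i,j$, and set $s=j-i$, so that $M_{k,D}\cong M_{k',D'}[s]$. The key step is to show $s=0$. As summands of motives of smooth projective varieties, the Chow groups $\CH_i$ of $M_{k,D}$ and $M_{k',D'}$ are supported in $[0,\dim X(p^k;D)]$ and $[0,\dim X(p^{k'};D')]$ respectively. By the definition of upper motives, their top Chow groups $\CH_{\dim X(p^k;D)}(M_{k,D})$ and $\CH_{\dim X(p^{k'};D')}(M_{k',D'})$ are non-zero. Moreover, by the self-duality of upper motives of generalized Severi-Brauer varieties of $p$-primary division algebras (their upper and lower motives coincide; see \cite{upper}), the groups $\CH_0(M_{k,D})$ and $\CH_0(M_{k',D'})$ are also non-zero. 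Transporting these four non-vanishing conditions across the isomorphism $M_{k,D}\cong M_{k',D'}[s]$ yields $s\geq 0$ and $s\leq 0$ from the bottom pieces, as well as matching of the top pieces; hence $s=0$ and $M_{k,D}\cong M_{k',D'}$.

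Theorem~\ref{classmotsb} then gives $k=k'$ and $\langle [D]\rangle=\langle [D']\rangle$ in $\Br(F)$. Since these classes are $p$-primary, this forces $[D']=[D]^u$ for some integer $u$ coprime to $p$, and therefore $\ind(D')=\ind(D^{\otimes u})=\ind(D)$, whence $\deg(D)=\deg(D')$ and $v_p(\deg(D))=v_p(\deg(D'))$. A further application of Theorem~\ref{classmotsb} for each $0\leq l<v_p(\deg(D))$ yields $M_{l,D}\cong M_{l,D'}$, and closing under Tate twists gives $\mathfrak{X}^p_A=\mathfrak{X}^p_{A'}$, as required.

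The main obstacle is forcing $s=0$: this is the only step that does not reduce directly to Theorem~\ref{classmotsb}, and it depends on the structural fact that upper motives of the generalized Severi-Brauer varieties in question have non-zero $\CH_0$ (equivalently, agree with their lower motives). Given this input, everything else is a routine bookkeeping on Brauer classes and indices via the classification.
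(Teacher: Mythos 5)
Your proof is correct and follows the same overall route as the paper: reduce via Proposition~\ref{upp} to the upper $p$-motives $M^k_D[i]$, identify a common non-Tate motive in the intersection as a twist of such a motive for both $D$ and $D'$, invoke the classification (Theorem~\ref{classmotsb}) to equate the subgroups $\langle[D]\rangle=\langle[D']\rangle$, and finally close under twists and $k$ to get $\mathfrak{X}^p_A=\mathfrak{X}^p_{A'}$.

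Where you add value is in making explicit two points the paper's proof asserts without justification. First, that in $M^k_D[i]\cong M^{k'}_{D'}[j]$ the twists must agree: this does not follow from the defining non-vanishing of $\CH^0$ alone (that only pins $s$ down to the difference of dimensions), and your appeal to the non-vanishing of $\CH_0$ as well — a consequence of the $p$-incompressibility of $X(p^k;D)$ established in \cite{upper}, a fact the paper itself uses in the proof of Theorem~\ref{classratsb} — is exactly the needed ingredient. Second, that $v_p(\deg D)=v_p(\deg D')$, so the admissible ranges of $k$ for $D$ and $D'$ coincide; your derivation via $[D']=[D]^u$ with $u$ coprime to $p$ and $\ind(D^{\otimes u})=\ind(D)$ is correct and is needed to conclude the full equality $\mathfrak{X}^p_A=\mathfrak{X}^p_{A'}$ rather than a mere inclusion. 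So this is the same argument as the paper, with two genuine gaps filled.
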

\begin{proof}Denote by $D$ and $D'$ the division algebras Brauer equivalent to the $p$-primary components of $A$ and $A'$. The class of any motive $M\in\mathfrak{X}^p_A\cap \mathfrak{X}^p_{A'}$ is isomorphic to the same twist of a motive $M^k_D$ and a motive $M^{k'}_{D'}$ for some integers $k$ and $k'$ by proposition \ref{upp}. If this motive is non-Tate, theorem \ref{classratsb} implies that the subgroups of $\Br(F)$ generated by $[D]$ and $[D']$ coincide, hence any isomorphism class of an indecomposable motive $M^k_{D}[i]\in \mathfrak{X}^p_A$ is the isomorphism class of $M^k_{D'}[i]$, and thus lies in $\mathfrak{X}^p_{A'}$.
\end{proof}

\textbf{Classification of $\mathfrak{X}^{p}_{A_1,...,A_n}$.} The previous section gave a glimpse on the difficulties which appear when dealing with products of generalized Severi-Brauer varieties. We now investigate the results which can still be deduced from it and show that the motivic dichotomy does not hold for products of projective linear groups.

\begin{theo}
Let $p$ be a prime and $D_1,...,D_n,D'_1,...,D'_n$ be division algebras of degree $p^s$ over $F$. Assume that $\exp(D_1)$$=$$...$$=$$\exp(D_n)$, $\exp(D'_1)$$=$$...$$=$$\exp(D'_m)$ and $0\leq k < s$. The upper $p$-motives $M^{k,...,k}_{D_1,...,D_n}$ and $M^{k,...,k}_{D'_1,...,D'_m}$ are isomorphic if and only if for any $1\leq i \leq n$ and for any $1\leq j \leq m$, there are two sequences $(\alpha_{i,u})_{1\leq u \leq m}$ and $(\beta_{j,v})_{1\leq v \leq n}$ such that :
\begin{enumerate}
\item $[D_i]=[D'_1]^{\alpha_{i,1}}... [D'_m]^{\alpha_{i,m}}$ and $[D'_j]=[D_1]^{\beta_{j,1}}\cdot ...\cdot [D_n]^{\beta_{j,n}}$ in $\Br(F)$;
\item $\sum_{u=1}^mv_p(\gcd(\alpha_{i,u},p^k))=(m-1)k$ and $\sum_{v=1}^nv_p(\gcd(\beta_{j,v},p^k))=(n-1)k$.
\end{enumerate}
\end{theo}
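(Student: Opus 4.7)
The plan is to observe that this theorem is essentially a reformulation of Theorem \ref{prodexp} via the motivic-to-rational-map translation provided by Proposition \ref{link}. No new argument is needed beyond chaining these two results, so the proof will be quite short, and the main work has already been done in the preceding section.

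More precisely, I would first apply Proposition \ref{link} to the $p$-primary division algebras $D_1,\dots,D_n$ and $D'_1,\dots,D'_m$ with the chosen integers $k_1=\cdots=k_n=k'_1=\cdots=k'_m=k$. This immediately reduces the statement to the equivalence between the isomorphism of the upper $p$-motives $M^{k,\dots,k}_{D_1,\dots,D_n}$ and $M^{k,\dots,k}_{D'_1,\dots,D'_m}$ and the existence of two rational maps
\[
X(p^k;D_1)\times\cdots\times X(p^k;D_n)\ratrat X(p^k;D'_1)\times\cdots\times X(p^k;D'_m).
\]

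Next I would invoke Theorem \ref{prodexp}: under the standing hypotheses that $\exp(D_1)=\cdots=\exp(D_n)$, $\exp(D'_1)=\cdots=\exp(D'_m)$ and $0\leq k<s$, the existence of such rational maps in both directions is equivalent to the existence, for every $1\leq i\leq n$ and $1\leq j\leq m$, of sequences $(\alpha_{i,u})_{1\leq u\leq m}$ and $(\beta_{j,v})_{1\leq v\leq n}$ satisfying the Brauer-theoretic identities $[D_i]=[D'_1]^{\alpha_{i,1}}\cdots [D'_m]^{\alpha_{i,m}}$ and $[D'_j]=[D_1]^{\beta_{j,1}}\cdots[D_n]^{\beta_{j,n}}$ together with the valuation conditions $\sum_{u=1}^m v_p(\gcd(\alpha_{i,u},p^k))=(m-1)k$ and $\sum_{v=1}^n v_p(\gcd(\beta_{j,v},p^k))=(n-1)k$. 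Concatenating the two equivalences yields exactly the statement.

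There is no real obstacle here, since both ingredients are already available: Proposition \ref{link} was proved via the characterization of isomorphisms of upper motives from \cite[Corollary 2.15]{upper} combined with Lemma \ref{lemma2}, and Theorem \ref{prodexp} is precisely the rational-map counterpart we need. The only thing to keep in mind is to verify that the hypotheses on the exponents and on $k$ required by Theorem \ref{prodexp} are exactly those we have assumed in the present theorem, so that the application is immediate; this is indeed the case.
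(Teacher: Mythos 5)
Your proposal is correct and follows exactly the same route as the paper: the published proof is a one-line reduction via Proposition \ref{link} to the rational-map criterion of Theorem \ref{prodexp}, which is precisely what you do.
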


\begin{proof}By proposition \ref{link} the classification of those upper motives corresponds exactly to the classification of theorem \ref{prodexp}.
\end{proof}

The results of the previous section however imply that theorem \ref{dichoto} does not hold for products of generalized Severi-Brauer varieties.

\begin{theo}
There is no analogue of the motivic dichotomy of $\PGL_1$ for products of projective linear groups.
\end{theo}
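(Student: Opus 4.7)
The plan is to exhibit, using Example \ref{ex1}, two products of projective linear groups whose associated sets $\mathfrak{X}^2$ intersect non-trivially in a non-Tate motive while failing to coincide. Let $\Delta_1$, $\Delta_2$, $\Delta_3$ be the three biquaternion division algebras of Example \ref{ex1}, take $p=2$, and compare the sequences $(\Delta_1,\Delta_2)$ and $(\Delta_1,\Delta_3)$. The subgroups of $\Br(F)$ generated by $\{[\Delta_1],[\Delta_2]\}$ and by $\{[\Delta_1],[\Delta_3]\}$ coincide, so Proposition \ref{class} provides bidirectional rational maps between $X(1;\Delta_1)\times X(1;\Delta_2)$ and $X(1;\Delta_1)\times X(1;\Delta_3)$, and Proposition \ref{link} translates this into the isomorphism $M^{0,0}_{\Delta_1,\Delta_2}\cong M^{0,0}_{\Delta_1,\Delta_3}$. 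This motive is non-Tate, since each $\Delta_i$ is a non-split biquaternion division algebra and therefore $X(1;\Delta_1)\times X(1;\Delta_2)$ has no rational point (hence by Lemma \ref{lemma2} no $0$-cycle of degree coprime to $2$); it places a common non-Tate element in $\mathfrak{X}^2_{\Delta_1,\Delta_2}\cap \mathfrak{X}^2_{\Delta_1,\Delta_3}$.

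To show $\mathfrak{X}^2_{\Delta_1,\Delta_2}\neq \mathfrak{X}^2_{\Delta_1,\Delta_3}$, I will argue that $M^{1,1}_{\Delta_1,\Delta_2}$, an element of $\mathfrak{X}^2_{\Delta_1,\Delta_2}$, does not lie in $\mathfrak{X}^2_{\Delta_1,\Delta_3}$. If it did, Proposition \ref{upp} would yield an isomorphism $M^{1,1}_{\Delta_1,\Delta_2}\cong M^{k_1,k_2}_{\Delta_1,\Delta_3}[e]$ for some $0\leq k_1,k_2\leq 1$ and $e\in\mathbb{Z}$. The Chow group support of the left-hand side lies in $[0,8]$ with $\CH_8\neq 0$, while that of the right-hand side lies in $[e,e+d']$ with $\CH_{e+d'}\neq 0$, where $d'=\dim X(2^{k_1};\Delta_1)\times X(2^{k_2};\Delta_3)\leq 8$; matching the top degree forces $e=8-d'\geq 0$. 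The self-duality of upper motives of projective homogeneous varieties (implying that their Poincaré polynomials over $\overline{F}$ have non-zero constant term) then forces $e=0$, whence $d'=8$ and $(k_1,k_2)=(1,1)$.

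Applying Proposition \ref{link} to the resulting isomorphism $M^{1,1}_{\Delta_1,\Delta_2}\cong M^{1,1}_{\Delta_1,\Delta_3}$ furnishes rational maps in both directions between $X(2;\Delta_1)\times X(2;\Delta_2)$ and $X(2;\Delta_1)\times X(2;\Delta_3)$. Composition with the projection on the second factor yields in particular a rational map $X(2;\Delta_1)\times X(2;\Delta_2)\dashrightarrow X(2;\Delta_3)$, contradicting the assertion of Example \ref{ex1} that no such map exists. The main obstacle is the rigidity step $e=0$: ruling out non-trivial shifts between the upper motive $M^{1,1}_{\Delta_1,\Delta_2}$ and some other upper motive of the form $M^{k_1,k_2}_{\Delta_1,\Delta_3}$ relies on the palindromic shape of the Poincaré polynomial of an upper motive over $\overline{F}$, a feature of the theory of upper motives not developed explicitly in the present paper.
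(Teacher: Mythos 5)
Your proof follows exactly the paper's strategy: use Example \ref{ex1}, show via Propositions \ref{class} and \ref{link} that $M^{0,0}_{\Delta_1,\Delta_2}\cong M^{0,0}_{\Delta_1,\Delta_3}$ gives a common non-Tate class, and show $M^{1,1}_{\Delta_1,\Delta_2}\notin\mathfrak{X}^2_{\Delta_1,\Delta_3}$ to distinguish the two sets. The difference is one of care rather than of route. The paper's proof simply asserts that $M^{1,1}_{\Delta_1,\Delta_2}$ ``is not isomorphic to an upper $2$-motive of $\PGL_1(\Delta_1)\times\PGL_1(\Delta_3)$'' and stops there, leaving implicit both the reduction to the pair $(k_1,k_2)=(1,1)$ and the elimination of a non-trivial shift $e$; since $\mathfrak{X}^2_{\Delta_1,\Delta_3}$ is by definition closed under twists, both points genuinely need to be addressed, and you address them. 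Your dimension count correctly forces $e=8-d'\geq 0$, and invoking the self-duality (Poincar\'e duality plus $p$-incompressibility of products of generalized Severi--Brauer varieties, from \cite{upper}) to get a non-zero constant term of the Poincar\'e polynomial then forces $e=0$ and $(k_1,k_2)=(1,1)$; Proposition \ref{link} and Example \ref{ex1} finish. Your honest remark that this self-dual/palindromic input is not recalled explicitly in the present paper is accurate: the paper leans on it tacitly through Karpenko's theory of upper motives, exactly as its proof of Theorem \ref{classratsb} leans on incompressibility. In short, the proof is correct, takes the paper's route, and fills a gap the paper leaves to the reader.
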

\begin{proof}Consider te three biquaternion division algebras $\Delta_1$, $\Delta_2$ and $\Delta_3$ of example \ref{ex1}. By proposition \ref{link} the indecomposable $2$-motives $M_{\Delta_1,\Delta_2}^{0,0}$ and $M_{\Delta_1,\Delta_3}^{0,0}$ are isomorphic, whereas $M_{\Delta_1,\Delta_2}^{1,1}$ is not isomorphic to an upper $2$-motive of $\PGL_1(\Delta_1)\times \PGL_1(\Delta_3)$. The set $\mathfrak{X}_{\Delta_1,\Delta_2}^2\cap \mathfrak{X}_{\Delta_1,\Delta_3}^2$ is then neither equal to $\mathfrak{X}_{\Delta_1,\Delta_2}^2$ nor reduced to the Tate motives.
\end{proof}

\begin{acknowledgements}
I would like to thank Nikita Karpenko for raising this question.
\end{acknowledgements}

%\begin{thebibliography}{PTW02} % '2nd argument contains the widest acronym'
%\bibitem[Lam94]{Lamport}
%  L.~Lamport, \emph{\LaTeX: A document preparation system \textup{(}%
%  updated for \LaTeXe\textup{)}} (Addison--Wesley, New York, 1994).
%\bibitem[PTW02]{PRL}
%  T.~Prokopec, O.~T\"ornkvist and R.~P.~Woodard, \emph{Photon mass
%  from inflation}, Phys.\ Rev.\ Lett.\ \textbf{89} (2002), 101301.
%\end{thebibliography}


\begin{thebibliography}{KMRT98}

\bibitem[Ami55]{amitsur} \textsc{Amitsur, S.} \emph{Generic splitting fields of central simple algebras}, Ann. of Math. 62, 8-43, 1955.

% \bibitem{andre} \textsc{André, Y.} \emph{Une introduction aux motifs : (Motifs purs, motifs mixtes, périodes)}, Société Mathématique de France, Panoramas et synthèses 17, 2004.
\bibitem[CGM05]{chermergil} \textsc{Chernousov, V., Gille, S., Merkurjev, A.} \emph{Motivic decomposition of isotropic projective homogeneous varieties}, Duke Math. J., 126(1) :137-159, 2005.

 
 \bibitem[CM06]{chermer} \textsc{Chernousov, V., Merkurjev, A.} \emph{Motivic decomposition of projective homogeneous varieties and the Krull-Schmidt theorem}, Transformation Groups 11, 371-386, 2006.

\bibitem[DC10]{decvarprojcoeff} \textsc{De Clercq, C.} \emph{Motivic decompositions of projective homogeneous varieties and change of coefficients}, C. R. Math. Acad. Sci. Paris 348, volume 17-18, 955--958, 2010.

\bibitem[EKM08]{EKM} \textsc{Elman, R., Karpenko, N., Merkurjev, A.} \emph{The Algebraic and Geometric Theory of Quadratic Forms}, AMS Colloquium Publications, Vol. 56, 2008.

 \bibitem[Ful98]{fulton} \textsc{Fulton, W.} \emph{Intersection theory}, Springer-Verlag, 1998.

%\bibitem{ICM} \textsc{Karpenko, N.} \emph{Canonical dimension}, Proceedings of the ICM 2010, vol. II, 146--161. 


%\bibitem{wit} \textsc{Karpenko, N.} \emph{On the first Witt index of quadratic forms.}, Invent. Math. 153, no. 2, 455-462, 2003. 

\bibitem[Kar11]{upper} \textsc{Karpenko, N.} \emph{Upper motives of algebraic groups and incompressibility of Severi-Brauer varieties}, to appear in J. Reine Angew. Math.


\bibitem[KMRT98]{KMRT} \textsc{Knus, M.-A., Merkurjev, A., Rost, M., Tignol, J.-P.} \emph{The book of involutions}, AMS Colloquium Publications, Vol. 44, 1998.

\bibitem[K\"{o}c91]{kock} \textsc{K\"{o}ck, B.} \emph{Chow motif and higher Chow theory of $G/P$}, Manuscripta Math., 70(4) :363-372, 1991.

\bibitem[MPW96]{flag} \textsc{Merkurjev, A., Panin, I., Wadsworth A.} \emph{Index reduction formulas for twisted flag varieties, I}, K-theory, 10(6) :517-596, 1996.


%\bibitem{index} \textsc{Merkurjev, A., Panin, A., Wadsworth, A.} \emph{Index reduction formulas for twisted flag varieties, I}, Journal K-theory 10, 517-596, 1996.

%\bibitem{sempetro} \textsc{Petrov, V., Semenov, N.} \emph{Higher Tits indices of algebraic groups}, Preprint, 2007.

% \bibitem{sempetrzain} \textsc{Petrov, V., Semenov, N., K.Zainoulline} \emph{J-invariant of linear algebraic groups}, Ann. Sci. Éc. Norm. Sup. 41, 1023-1053, 2008.

% \bibitem{vish} \textsc{Vishik, A.} \emph{Motives of quadrics with applications to the theory of quadratic forms}, Lect. Notes in Math. 1835, Proceedings of the Summer School "Geometric Methods in the Algebraic Theory of Quadratic Forms, Lens 2000", 25-101, 2004.
%
% \bibitem{vish2} \textsc{Vishik, A.} \emph{Excellent connections in the motives of quadrics}, Annales Scientifiques de L'ENS, in press, 2010.
%
%\bibitem{vish3} \textsc{Vishik, A.} \emph{Fields of $u$-invariant $2^r+1$}, ''Algebra, Arithmetic and Geometry - In Honor of Yu.I.Manin'', Birkhauser, 661-685, 2010.

\end{thebibliography}
\end{document}